\title[Acyclicity of Broussous-Schneider Coefficient Systems]{Acyclicity of Broussous-Schneider\\ Coefficient Systems}
\author[J. Navarro]{Javier Navarro}
\newcommand{\conj}[1]{\left\lbrace#1\right\rbrace}
\newtheorem*{thm*}{Theorem}
\newtheorem*{lem*}{Lemma}
\theoremstyle{definition}
\theoremstyle{remark}
\newtheorem{rem}{Remark}[section]
\theoremstyle{remark}
\DeclareMathOperator{\GL}{GL}
\DeclareMathOperator{\Her}{Her}
\DeclareMathOperator{\Res}{Res}
\DeclareMathOperator{\Hom}{Hom}
\DeclareMathOperator{\Ind}{Ind}
\DeclareMathOperator{\cInd}{c-Ind}
\DeclareMathOperator{\Out}{Out}
\def\namedlabel#1#2{\begingroup
    #2%
    \def\@currentlabel{#2}%
    \phantomsection\label{#1}\endgroup
}
\let\olditemize\enumerate
\def\enumerate{\olditemize\itemsep=0pt}
\newenvironment{Theorem}[1][]{\par\noindent
   \textbf{Theorem.}\space\emph{#1}}{}
\newenvironment{Proposition}[1][]{\par\noindent
   \textbf{Proposition.}\space#1}{}  
\newenvironment{Lemma}[1][]{\par\noindent
   \textbf{Lemma.}\space#1}{}
\tikzset{
  symbol/.style={
    draw=none,
    every to/.append style={
      edge node={node [sloped, allow upside down, auto=false]{$#1$}}}
  }
}
\date{\today}
\subjclass[2020]{22E50, 11F70}
\keywords{Bruhat-Tits building; type theory; $p$-adic general linear group}
\begin{document}

\begin{abstract}
     We give a proof of the Acyclicity Conjecture stated by Broussous and Schneider in \cite{BS2017type}. As a consequence, we obtain an exact resolution of every admissible representation on each Bernstein block of $\GL(N)$ associated to a simple type. 
\end{abstract}    
\maketitle

\section{Introduction}
    Let $F$ be a non-Archimedean local field and let $G$ be the group $\GL_N(F)$ of $F$-points of the general linear group $\GL_N$, where $N\geq 2$ is a fixed integer. In this work, we build on the results of Broussous and Schneider \cite{BS2017type} and prove the exactness of the augmented chain complex \[
    C_{\bullet}^{\rm or}(X_\pi,\mathcal{C}[\pi])\longrightarrow\mathcal{V}
    \] 
    for all admissible smooth representation $(\pi,\mathcal{V})$ of $G$ in each Bernstein block corresponding to a simple type. Here, $\mathcal{C}[\pi]$ denotes the Broussous-Schneider coefficient system on the simplicial complex $X_\pi$ constructed in [\emph{loc.\,cit.}] (see also \S\,\ref{sec:BScoeff}).
    
    To explain our results in more detail, let $\mathcal{R}(G)$ be the category of smooth complex representations of $G$. The Bernstein decomposition \cite{bernstein84} states that $\mathcal{R}(G)$ decomposes into a product of subcategories $\mathcal{R}^{\mathfrak{s}}(G)$ indexed by the set of inertial equivalence classes $\mathfrak{I}$ consisting of pairs $(L,\sigma)$ of a Levi subgroup $L$ of $G$ together with an irreducible supercuspidal representation $\sigma$ of $L$:
\[
\mathcal{R}(G)=\prod_{\mathfrak{s}\in\mathfrak{I}}\mathcal{R}^\mathfrak{s}(G).
\]
This leads to the insightful approach to studying $\mathcal{R}(G)$ by restricting to the supercuspidal representations of Levi subgroups of $G$. 

The fundamental results of Bernstein interact with the theory of \emph{types}. The latter approach is through restriction to compact open subgroups, where we base ourselves in Bushnell-Kutzko \cite{bushnell1998smooth}. Specifically, a pair $(J,\lambda)$, where $J$ is a compact open subgroup of $G$ and $\lambda$ is an irreducible smooth representation of $\lambda$, is called an $\mathfrak{s}$-type if the irreducible smooth representations in $\mathcal{R}^\mathfrak{s}(G)$ are precisely those that contain $\lambda$ upon restriction to $J$. If $(J,\lambda)$ is an $\mathfrak{s}$-type, then $\mathcal{R}^\mathfrak{s}(G)$ coincides with the category $\mathcal{R}_{(J,\lambda)}(G)$ of smooth representations that are generated by their $\lambda$-isotypic component. The existence of $\mathfrak{s}$-types for all $\mathfrak{s}\in\mathfrak{I}$ was established by Bushnell and Kutzko \cite{bushnell1993admissible, bushnell1999semisimple}, who also provided a systematic method for constructing them.

To illustrate the interplay between types and coefficient systems, let us consider the case where $\mathfrak{s}$ is the inertial equivalence class of a pair $(G,\pi)$, with $\pi$ an irreducible supercuspidal representation of $G$. Let $(J,\lambda)$ be the $\mathfrak{s}$-type constructed in \cite{bushnell1993admissible}. By the theory in [\emph{loc.\,cit.}], there exists a finite field extension $E/F$ and an embedding $E^\times\hookrightarrow G$ such that $\pi$ is compactly induced: \[
\pi \cong \cInd_{E^\times J}^G\Lambda, 
\]
where $\Lambda$ is an extension of $\lambda$ to $E^\times J$ (Theorem 6.2.2 [\emph{loc.\,cit.}]). The group $E^\times J$ is contained in the stabilizer $G_x$ of a point $x$ in the Bruhat-Tits building $X$ of $G$. Here, $x$ is the barycenter of a facet in $X$ and corresponds to a vertex of the building $X_E$ of $\Res_{E/F}\GL_{N/[E:F]}$ over $F$ under an affine embedding $X_E\hookrightarrow X$. By defining $\overline{\Lambda}= \cInd_{E^\times J}^{G_x}\Lambda$, we obtain an isomorphism
\begin{equation}\label{eq:intr:1}
    \pi \cong \cInd_{G_x}^G\overline{\Lambda}.
\end{equation}
Let $X_\pi$ be the $0$-dimensional simplicial complex given by the orbit $G\cdot x$ in $X$, and define the $G$-equivariant coefficient system $\mathcal{C}[\pi]$ on $X_\pi$ by assigning to $x$ the $\overline{\Lambda}$-isotypic space $\mathcal{V}^{\overline{\Lambda}}$ (and to $g\cdot x$ the space $g\mathcal{V}^{\overline{\Lambda}}\subset \mathcal{V}$). From \eqref{eq:intr:1}, it follows that there is an isomorphism of $G$-spaces:
\begin{equation*}
C_0^{\rm or}(X_\pi, \mathcal{C}[\pi])\xrightarrow{\ \sim\ } \mathcal{V},
\end{equation*}
where $C_0^{\rm or}(X_\pi, \mathcal{C}[\pi])$ denotes the space of (oriented) $0$-chains of $X_\pi$ with coefficients in $\mathcal{C}[\pi]$. (The superscript ``or" is purely notational in this case.) 

This geometric realization of representations was first introduced by Schneider and Stuhler in \cite{schneider1993resolutions} for general linear groups, and later extended to arbitrary connected reductive groups in \cite{schneider1997representation}. Broussous and Schneider further refined it in \cite{schneider1993resolutions} for Bernstein blocks associated to \emph{simple types} of $G$. Here, an $\mathfrak{s}$-type is said to be simple if $\mathfrak{s}$ has a representative $(L,\sigma)$ where $L$ is of the form $\GL_{N/e}(F)\times\cdots\times \GL_{N/e}(F)$ for a positive integer $e$ dividing $N$, and $\sigma\cong \sigma_0\otimes\cdots\otimes\sigma_0$ for an irreducible supercuspidal representation $\sigma_0$ of $\GL_{N/e}(F)$. They proved that the corresponding augmented chain complex is exact when $\pi$ is a twist of an irreducible discrete series representation and reduced the general case to a technical conjecture (Conjeture X.4.1 [\emph{loc.\,cit.}]). As an important application, this yielded explicit pseudo-coefficients and character formulas for discrete series representations. 

In this paper, we prove Broussous-Schneider's conjecture, establishing the exactness of the chain complex for all admissible $\pi$ (see Theorem \ref{main:thm} below). Our proof follows the method of \cite{BS2017type}, with a key new ingredient being a result on the representation theory of parahoric subgroups stated in Lemma \ref{lem:techn}, which, in the discrete series case, was established in Lemma XI.2.3 [\emph{loc.\,cit.}]. 

Recently, in \cite{dat2024}, Dat has constructed coefficient systems attached to smooth representations of any connected reductive group that splits over a tamely ramified extension of $F$, under mild assumptions on $p$, following the Kim-Yu construction of types \cite{kim2017construction}. In \S,\ref{ss:dat}, we observe that this construction coincides with that of Broussous and Schneider when $p\nmid N$.

The paper is organized as follows. After fixing notation in \S\,\ref{sec:notation}, we establish a result on the structure of representations of parahoric subgroups in \S\,\ref{sec:tech:lemma}. In \S\,\ref{sec:BScoeff}, we recall the construction of coefficient systems by Broussous and Schneider. Finally, in \S\,\ref{sec:main}, we prove our main result. 

\subsection*{Acknowledgments} The author thanks his advisors, Anne-Marie Aubert and Luis Lomel\'i for their guidance and support  throughout this work, as well as for their valuable feedback on earlier versions of this paper. He is also grateful to Paul Broussous for insightful discussions on the subject, and to Guy Henniart for his punctual observations on a previous draft. The author acknowledges the hospitality and stimulating environment provided by the Instituto de Matem\'aticas of the Pontificia Universidad Cat\'olica de Valpara\'iso and the Institut de Math\'ematiques de Jussieu-Paris Rive Gauche. The author was partially supported by FONDECYT Grant 1212013 and ANID National Doctoral Scholarship 21211693. 

\section{Notation}\label{sec:notation}

\subsection{} We work with a non-Archimedean local field $F$ with finite residue field of characteristic $p$. We let $G$ denote the group $\GL_N(F)$ of $F$-points of the general linear group $\GL_N$, for some integer $N\geq 2$. For convenience, we fix an $N$-dimensional $F$-vector space $V$ and identify $G={\rm Aut}_FV$, after choosing a basis. We often work with field extensions $E/F$ of degree dividing $N$ and take an embedding of groups $\iota\colon E^\times\hookrightarrow G$. Since our constructions only involve simple types, the choice of $\iota$ is inconsequential, and we systematically identify $E^\times$ with its image in $G$ and view $E$ as an $F$-subalgebra of $A={\rm End}_FV$, omitting explicit reference to $\iota$. For such $E$, we denote by $G_E$ the subgroup of $G$ given by ${\rm Aut}_EV$.

\subsection{} For every finite field extension $E/F$ we denote by $\mathfrak{o}_E$ the ring of integers of $E$, and write $\mathfrak{p}_E$ for the maximal ideal of $\mathfrak{o}_E$. We let $k_E=\mathfrak{o}_E/\mathfrak{p}_E$, the residue field of $E$. For an hereditary $\mathfrak{o}_F$-order $\mathfrak{A}$ in $A$ with Jacobson radical $\mathfrak{P}$, we let $U(\mathfrak{A})$ denote the multiplicative group $\mathfrak{A}^\times$, and $U^1(\mathfrak{A})= 1+\mathfrak{P}$ the subgroup of 1-units. If $E$ is contained in $A$ and $\mathfrak{B}$ is a hereditary $\mathfrak{o}_E$-order in ${\rm End}_E V$, we denote by $\mathfrak{A}(\mathfrak{B})$ the hereditary $\mathfrak{o}_F$-order in $A$ normalized by $E$ such that $\mathfrak{B}={\rm End}_E V \cap \mathfrak{A}(\mathfrak{B})$.

\subsection{} We denote by $X$, resp. $X_E$, the reduced Bruhat-Tits building of $G$, resp. $G_E$. If $E/F$ is Galois, then $X_E$ is the building of $\Res_{E/F}\GL_{N/[E:F]}$ over $F$. For each $E$, we view $X_E$ as embedded in $X$ via the unique affine and $G_E$-equivariant map $j_E\colon X_E\to X$ (cf., \cite[Theorem 1.1]{broussous2002building}). We shall often make use of an anti-isomorphism between hereditary orders $\Her(A)$ in $A$ and simplices $F(X)$ of $X$:
\[
\Her(A)^{\rm opp} \longrightarrow F(X). 
\]
See \cite[Corollaire 2.15]{bruhat1984schemas} and \cite[\S\,I.2]{BS2017type}.

\subsection{} We let $\mathcal{R}(G)$ be the category of smooth complex representations of $G$. If $\chi$ is a character of the center $Z(G)\cong F^\times$ of $G$, we let $\mathcal{R}_\chi(G)$ denote the full subcategory of $\mathcal{R}(G)$ consisting of representations with central character $\chi$. For any pair of groups $H\leq K$ and a representation $\pi$ of $H$, we define $H^g$ to be the group $gHg^{-1}$, and use $\pi^g$ to denote the representation of $H^g$ given by $\pi^g(ghg^{-1})=\pi(h)$, for all $g\in K$. Also, we denote by $N_K(H)$, resp. $C_K(H)$, the normalizer, resp. centralizer, of $H$ in $K$, and let $Z(K)=Z_K(K)$ be the center of $K$.

\section{Results from hereditary orders and the theory of types}\label{sec:tech:lemma}

The aim of this section is to establish a result,  Lemma \ref{lem:techn} below, concerning the represen\-tation theory of parahoric subgroups. 

\subsection{}\label{ss:her:ord} Let us start by reviewing some well-known results on hereditary orders. We take a field extension $E/F$ in $A$ and a hereditary $\mathfrak{o}_E$-order $\mathfrak{B}$ in $B={\rm End}_E V$. Let $\mathcal{L}=\left\lbrace L_i : i\in\mathbb{Z}\right\rbrace$ be a lattice chain in $V$ such that 
\[
\mathfrak{B}={\rm End}_{\mathfrak{o}_E}^0(\mathcal{L})\coloneqq\left\lbrace x\in B : xL_i \subseteq L_i\hspace{0.5em} \mbox{for all}\hspace{0.5em} i\in\mathbb{Z}\right\rbrace,
\]
(cf., \cite[\S\,1]{bushnell1993admissible}). We let $e=e(\mathfrak{B}\vert \mathfrak{o}_E)$ denote the $\mathfrak{o}_E$-period of $\mathcal{L}$, and let $R=\dim_E(V)$. We choose an $\mathfrak{o}_E$-basis $\left\lbrace v_1,\ldots, v_R\right\rbrace$ of $\mathcal{L}$ such that it generates $L_0$. Let us recall here that this means that $\left\lbrace v_1,\ldots, v_R\right\rbrace$ is an $\mathfrak{o}_E$-basis of $L_0$ such that
\[
L_j= \coprod_{k=1}^R \mathfrak{p}_E^{a(j,k)}v_k, \quad j\in\mathbb{Z},
\]
for integers $a(j,1)\leq \cdots \leq a(j,k)$. 
We regard $G_E$ as $\GL_R(E)$ under this choice of basis. For $0\leq i\leq e-1$, we put $n_i=n_i(\mathfrak{B})=\dim_{k_E}(L_i/L_{i+1})$, and set $n_{-1}=0$. For each $i$, we let $V^i$ be the $E$-linear span of the set $\left\lbrace v_{n_{i-1}+1},\dots, v_{n_i}\right\rbrace$. We write $B^i={\rm End}_E(V^i)$ and $\mathfrak{B}^i = \mathfrak{B}\cap B^i$. In this way, $\mathfrak{B}^i$ is a maximal order in $B^i$ and we can identify $U(\mathfrak{B})/U^1(\mathfrak{B})$ with 
\[
\mathbb{L}_{\mathfrak{B}}(k_E)=\prod_{i=0}^{e-1} (\mathfrak{B}^i/\mathfrak{p}_E\mathfrak{B}^i)^\times = \prod_{i=0}^{e-1} \GL_{n_i}(k_E).
\]
We set $\mathfrak{M}(\mathfrak{B})=\prod \mathfrak{B}^i$, viewed as a subgroup of $U(\mathfrak{B})$. If $\mathfrak{B}$ is maximal, then we write $\mathbb{L}_{\mathfrak{B}}(k_E)=\mathbb{G}(k_E)=\GL_R(k_E)$.

    To facilitate subsequent calculations, we further develop the matrix framework and recall some terminology from \cite{bushnell1987hereditary} and \cite{bushnell1993admissible}. Consider the $e$-tuple $\mathbf{n}=(n_0,\ldots, n_{e-1})$. We denote $\mathbb{M}(\mathbf{n}, \mathfrak{o}_E)$ as the collection of $e\times e$ block matrices over $\mathfrak{o}_E$ where the $(i,j)$-block has dimension $n_i\times n_j$. The elements of $\mathbb{M}(\mathbf{n}, \mathfrak{o}_E)$ are referred to as $\mathbf{n}$-block matrices over $\mathfrak{o}_E$. Each matrix in $\mathbb{M}(\mathbf{n}, \mathfrak{o}_E)$ is represented as $(A_{ij})$, where $A_{ij}$ denotes the $(i,j)$-block matrix. We denote by $\mathbb{M}_{n_i}^{n_j}(\mathfrak{o}_E)$ the ring of $n_i\times n_j$-dimensional matrices over $\mathfrak{o}_E$, and we fix a uniformizer $\pi_E\in\mathfrak{o}_E$. With this, the order $\mathfrak{B}$ is identified with the ring of $\mathbf{n}$-block matrices $(A_{ij})$ over $\mathfrak{o}_E$  as follows
\begin{equation}
    \mathfrak{B}=\left\lbrace (A_{ij}) \vert A_{ij}\in\mathbb{M}_{n_i}^{n_j}(\mathfrak{o}_E) \mbox{ if } i\leq j \mbox{ and } A_{ij}\in \pi_E\mathbb{M}_{n_i}^{n_j}(\mathfrak{o}_E) \mbox{ if } i>j   \right\rbrace,
\end{equation}
and the radical $\mathfrak{Q}$ with the group
 \begin{equation}
    \mathfrak{Q}=\left\lbrace (A_{ij}) \vert A_{ij}\in\mathbb{M}_{n_i}^{n_j}(\mathfrak{o}_E) \mbox{ if } i<j \mbox{ and } A_{ij}\in \pi_E\mathbb{M}_{n_i}^{n_j}(\mathfrak{o}_E) \mbox{ if } i\geq j   \right\rbrace.
\end{equation}

\subsection{} Let us fix now a principal hereditary $\mathfrak{o}_E$-order $\mathfrak{B}_0={\rm End}_{\mathfrak{o}_E}^0(\mathcal{L}_0)$, for a lattice chain $\mathcal{L}_0=\left\lbrace L_i^0 : i\in\mathbb{Z}\right\rbrace$. We set $e_0=e(\mathfrak{B}_0\vert \mathfrak{o}_E)$ and denote $f$ as the common value of the integers $n_i(\mathfrak{B}_0)$ for $0\leq i\leq e_0-1$. We let $\rho$ be an irreducible representation of $U(\mathfrak{B}_0)$ that is the inflation of a cuspidal representation of $U(\mathfrak{B}_0)/U^1(\mathfrak{B}_0)$, also denoted by $\rho$, of the following form: upon identifying $U(\mathfrak{B}_0)/U^1(\mathfrak{B}_0)$ with $\prod_{i=0}^{e_0-1} \GL_{f}(k_E)$, $\rho$ takes the form $\rho_0^{\otimes e_0}$, for an irreducible cuspidal representatioin $\rho_0$ of $\GL_f(k_E)$. A wide range of problems concerning the representation theory of $G$ can be reduced to problems involving the representations of $G_E$ containing representations like $\rho$, primarily through Hecke algebra isomorphisms. One of the most basic facts on which this reduction relies is that the space
\[
I_x(\rho)=\Hom_{U(\mathfrak{B}_0)\cap U(\mathfrak{B}_0)^x}(\rho,\rho^x)
\]
is either zero or one-dimensional. In particular, the dimension of $I_x(\rho)$ remains constant for all $x$ in the intertwining set $I_{G_E}(\rho)=\left\lbrace x\in G_E : I_x(\rho)\not= 0\right\rbrace$ of $\rho$. In Lemma \ref{lem:techn} below we shall see that a similar assertion holds even if we consider representations of $U(\mathfrak{B})$ for hereditary orders $\mathfrak{B}$ containing $\mathfrak{B}_0$.

\subsection{}\label{ss:weyl:intertw} Before delving into the technicalities of the lemma, it is useful to recall some standard facts regarding the intertwining of $\rho$. For details, see \cite{bushnell1993admissible}. Let $L/E$ be an unramified extension of degree $f$ in $B$ such that $\mathfrak{B}_0$ is normalized by $L^\times$. We set $C={\rm End}_L V$ and $\mathfrak{C}_{\rm min}= C\cap \mathfrak{B}_0$; the order $\mathfrak{C}_{\rm min}$ is minimal in $C$. In this manner, $G_L$ is contained in $I_{G_E}(\rho)$ and, furthermore, the inclusion $G_L\to G_E$ gives a bijection
\begin{equation}\label{eq:double:intertwining}
U(\mathfrak{C}_{\rm min})\backslash G_L/ U(\mathfrak{C}_{\rm min}) \longleftrightarrow U(\mathfrak{B}_0)\backslash I_{G_E}(\rho)/ U(\mathfrak{B}_0).
\end{equation}
The above is essentially part of the Hecke algebra isomorphism 
\[
\mathcal{H}(G_L, 1_{U(\mathfrak{C}_{\rm min})})\xrightarrow{\ \sim\ }\mathcal{H}(G_E,\rho)
\]
established in \cite[\S\,5]{bushnell1993admissible}. Let us recall an explicit set of representatives for \eqref{eq:double:intertwining}. We embed the group $\Sigma_{e_0}$ of permutations of the set $\left\lbrace 1,\ldots, e_0\right\rbrace$ into the group $W_0$ of permutations of the basis $\left\lbrace v_1,\ldots, v_n\right\rbrace$ by defining the permutation 
\[
v_{d(i)+j}\longmapsto v_{d(s(i))+j},  \quad 1\leq i\leq e_0,\ 0\leq j\leq f-1,
\]
for each $s\in \Sigma_{e_0}$, where $d(i)=(i-1)f$. We write $W(\mathfrak{B}_0)$ for the image of $\Sigma_{e_0}$ in $W_0$ via this embedding. We define $D(\mathfrak{B}_0)$ as the subgroup of diagonal matrices in $G_E$ whose eigenvalues are powers of $\pi_E$ and which centralize $\mathfrak{M}(\mathfrak{B}_0)^\times$. The group $W(\mathfrak{B}_0)$ normalizes $D(\mathfrak{B}_0)$ and 
\begin{equation}\label{eq:weyl:intertw}
\widetilde{W}(\mathfrak{B}_0)= W(\mathfrak{B}_0) \ltimes D(\mathfrak{B}_0),
\end{equation}
the group generated by $W(\mathfrak{B}_0)$ and $D(\mathfrak{B}_0)$, is a set of representatives of \eqref{eq:double:intertwining}.
\subsection{A useful Lemma}
We now arrive at the lemma. We keep the notation above. In particular, $\mathfrak{B}_0$ is a principal hereditary $\mathfrak{o}_E$-order in $B$ and $\rho$ is an irreducible representation of $U(\mathfrak{B}_0)$ inflated from a cuspidal representation of $U(\mathfrak{B}_0)/U^1(\mathfrak{B}_0)$ of the form $\rho_0^{\otimes e_0}$. Let $\mathfrak{B}={\rm End}_{\mathfrak{o}_e}^0(\mathcal{L})$ be a hereditary $\mathfrak{o}_E$-order in $B$ containing $\mathfrak{B}_0$. Shifting the index of $\mathcal{L}=\left\lbrace L_i : i\in\mathbb{Z}\right\rbrace$ and changing $\left\lbrace v_1,\ldots, v_R\right\rbrace$ if necessary, we may assume that it is a common basis of $\mathcal{L}_0$ and $\mathcal{L}$, and that $L^0_0=L_0$. 

\subsubsection{}\label{lem:techn}\begin{Lemma}
    \emph{Let $\tau$ be the inflation of an irreducible representation of $U(\mathfrak{B})/U^1(\mathfrak{B})$ to $U(\mathfrak{B})$. Then
    \begin{equation}\label{eq:lem:techn}
        \dim \Hom_{U(\mathfrak{B}_0)\cap U(\mathfrak{B})^x}(\rho, \tau ^x) = \dim \Hom_{U(\mathfrak{B}_0)}(\rho, \tau)
    \end{equation}
    for all $x\in I_{G_E}(\rho)$.}
\end{Lemma}

\begin{proof}
    Since $U(\mathfrak{B}_0)$ is contained in $U(\mathfrak{B})$, it is sufficient to verify \eqref{eq:lem:techn} for a set of representatives of $U(\mathfrak{B}_0)\backslash I_{G_E}(\rho)/ U(\mathfrak{B}_0)$. Let us then take $x\in\widetilde{W}(\mathfrak{B}_0)$. We first assume that $x\in D(\mathfrak{B}_0)$ and satisfies the following condition: 
    \begin{itemize}
        \item[\namedlabel{cond:D}{(D)}]\label{D1} If $x={\rm diag}(\pi_E^{a_1},\dots, \pi_E^{a_R})$, then $a_{n_{i-1}+1}\geq\cdots\geq a_{n_{i}}$ for all $0\leq i\leq e-1$. 
    \end{itemize}
    Recall that $x$ centralizes $\mathfrak{M}(\mathfrak{B}_0)^\times$, thus \[
    a_{jf+1}=a_{jf+2}=\cdots = a_{(j+1)f} \quad \mbox{for all } 0\leq j\leq e_0-1.
    \]
    To simplify notation, we write $b_j$ for $a_{jf}$ for all $1\leq j\leq e_0$, and set $d=x^{-1}$. Then we have 
    \[
    \Hom_{U(\mathfrak{B}_0)\cap U(\mathfrak{B})^x}(\rho, \tau ^x)=\Hom_{U(\mathfrak{B}_0)^d\cap U(\mathfrak{B})}(\rho^d, \tau ).
    \]
    Now, since $d$ normalizes $\mathfrak{M}(\mathfrak{B}_0)^\times$, then
\[
\frac{U(\mathfrak{B}_0)^d\cap U(\mathfrak{B})}{U^1(\mathfrak{B}_0)^d\cap U(\mathfrak{B})}=\frac{U(\mathfrak{B}_0)}{U^1(\mathfrak{B}_0)},
\]
and $\rho^d\vert U(\mathfrak{B}_0)^d\cap U(\mathfrak{B})$ is the inflation of the representation $\rho^d$ of $\mathbb{L}_{\mathfrak{B}_0}(k_E)$. But $d$ centralizes $\mathbb{L}_{\mathfrak{B}_0}(k_E)$, hence $\rho^d=\rho$ as $\mathbb{L}_{\mathfrak{B}_0}(k_E)$-modules. Therefore
\[
\Hom_{U(\mathfrak{B}_0)^d\cap U(\mathfrak{B})}(\rho^d, \tau)=\Hom_{\mathbb{L}_{\mathfrak{B}_0}(k_E)}(\rho, \tau_{U^1(\mathfrak{B}_0)^d\cap U(\mathfrak{B})}), 
\]
where $\tau_{U^1(\mathfrak{B}_0)^d\cap U(\mathfrak{B})}$ is the representation of $\mathbb{L}_{\mathfrak{B}_0}(k_E)$ on the space of $U^1(\mathfrak{B}_0)^d\cap U(\mathfrak{B})$-coinvariants of $\tau$. Since $\tau$ is the inflation of a representation of $\mathbb{L}_{\mathfrak{B}}(k_E)$, then \eqref{eq:lem:techn} follows if $U^1(\mathfrak{B}_0)^d\cap U(\mathfrak{B})\subseteq U^1(\mathfrak{B}_0)$ and
\begin{equation}\label{eq:quot:inv:1}
\frac{U^1(\mathfrak{B}_0)^d\cap U(\mathfrak{B})}{U^1(\mathfrak{B}_0)^d\cap U^1(\mathfrak{B})}=\frac{U^1(\mathfrak{B}_0)}{U^1(\mathfrak{B})}.
\end{equation}
Indeed, in this case we have $\tau_{U^1(\mathfrak{B}_0)^d\cap U(\mathfrak{B})}=\tau_{U^1(\mathfrak{B}_0)}$ and
\[
\Hom_{U(\mathfrak{B}_0)^d\cap U(\mathfrak{B})}(\rho^d, \tau)=\Hom_{\mathbb{L}_{\mathfrak{B}_0}(k_E)}(\rho, \tau_{U^1(\mathfrak{B}_0)})=\Hom_{U(\mathfrak{B}_0)}(\rho,\tau).
\]
First, we show that $U^1(\mathfrak{B}_0)^d\cap U(\mathfrak{B})\subseteq U^1(\mathfrak{B}_0)$. By viewing $U(\mathfrak{B})$ and $U^1(\mathfrak{B}_0)$ as $\mathbf{n}$-block matrices over $\mathfrak{o}_E$, we observe that both groups coincide in all $(i,j)$-blocks for $i\not= j$. Consequently, we may restrict to $\mathfrak{M}(\mathfrak{B})^\times$. Moreover, since $d$ is diagonal, it is sufficient to consider any single block, allowing us to assume $\mathfrak{B}$ maximal (note that condition \ref{cond:D} is defined block-wise). Letting $\mathbf{f}$ denote the tuple $(n_0(\mathfrak{B}_0),\ldots, n_{e_0-1}(\mathfrak{B}_0))=(f,\ldots, f)$, the group $U^1(\mathfrak{B}_0)^d$ is represented by the $\mathbf{f}$-block matrices over $\mathfrak{o}_E$ given by
\begin{equation*}
    U^1(\mathfrak{B}_0)^d=\left( \begin{array}{cccccc}
        1+\mathfrak{p}_E & \pi_E^{b_2-b_1}\mathfrak{o}_E & \pi_E^{b_3-b_1}\mathfrak{o}_E & \cdots & \cdots & \pi_E^{b_{e_0}-b_1}\mathfrak{o}_E \\
         \pi_E^{b_1-b_2}\mathfrak{p}_E & 1+\mathfrak{p}_E & \pi_E^{b_3-b_2}\mathfrak{o}_E & \cdots & \cdots & \pi_E^{b_{e_0}-b_2}\mathfrak{o}_E \\ 
         \pi_E^{b_1-b_3}\mathfrak{p}_E & \pi_E^{b_2-b_3}\mathfrak{p}_E & 1+\mathfrak{p}_E & \cdots & \cdots & \pi_E^{b_{e_0}-b_3}\mathfrak{o}_E \\ 
         \vdots & & \ddots & \ddots & & \vdots \\
         \vdots & & & \ddots & \ddots & \vdots \\
         \pi_E^{b_1-b_{e_0}}\mathfrak{p}_E & \cdots & \cdots & \cdots & \pi_E^{b_{e_0-1}-b_{e_0}}\mathfrak{p}_E & 1+\mathfrak{p}_E
    \end{array}
    \right).
\end{equation*}
By condition \ref{cond:D}, $b_j-b_i\leq 0$ for all $i<j$, leading to 
\begin{equation}\label{eq:block:d:lem}
    U^1(\mathfrak{B}_0)^d\cap U(\mathfrak{B})= \left( \begin{array}{cccccc}
        1+\mathfrak{p}_E & \mathfrak{o}_E & \mathfrak{o}_E & \cdots & \cdots & \mathfrak{o}_E \\
         \pi_E^{b_{1,2}}\mathfrak{p}_E & 1+\mathfrak{p}_E & \mathfrak{o}_E & \cdots & \cdots & \mathfrak{o}_E \\ 
         \pi_E^{b_{1,3}}\mathfrak{p}_E & \pi_E^{b_{2,3}}\mathfrak{p}_E & 1+\mathfrak{p}_E & \cdots & \cdots & \mathfrak{o}_E \\ 
         \vdots & & \ddots & \ddots & & \vdots \\
         \vdots & & & \ddots & \ddots & \vdots \\
         \pi_E^{b_{1,e_0}}\mathfrak{p}_E & \cdots & \cdots & \cdots & \pi_E^{b_{e_0-1,e_0}}\mathfrak{p}_E & 1+\mathfrak{p}_E
    \end{array}
    \right),
    \end{equation}
where we abbreviate $b_{i,j}=b_i-b_j$. It follows that $U^1(\mathfrak{B}_0)^d\cap U(\mathfrak{B})\subseteq U^1(\mathfrak{B}_0)$. Let us prove now \eqref{eq:quot:inv:1}. Proceeding block-wise, it again suffices to assume $\mathfrak{B}$ maximal. But in this case, from \eqref{eq:block:d:lem}, the desired identity becomes clear. In fact, both terms in \eqref{eq:quot:inv:1} are identified with the $\mathbf{f}$-block matrices over $k_E$ such that the $(i,j)$-block is zero for all $i\geq j$.  

Now, take any $x\in \widetilde{W}(\mathfrak{B}_0)$, say $x=wd$ for $w\in W(\mathfrak{B}_0)$ and $d\in D(\mathfrak{B}_0)$.  Write $d={\rm diag}(\pi_E^{a_1},\ldots, \pi_E^{a_R})$. As noted previously, we have
    \[
    \Hom_{U(\mathfrak{B}_0)\cap U(\mathfrak{B})^{wd}}(\rho, \tau ^{wd})=\Hom_{U(\mathfrak{B}_0)^{w^{-1}}\cap U(\mathfrak{B})^d}(\rho^{w^{-1}}, \tau ^d).
    \]
Moreover, since both $w$ and $d$ normalize $\mathfrak{M}(\mathfrak{B}_0)^\times$, it follows that
\[
\frac{U(\mathfrak{B}_0)^{w^{-1}}\cap U(\mathfrak{B})^d}{U^1(\mathfrak{B}_0)^{w^{-1}}\cap U(\mathfrak{B})^d}=\frac{U(\mathfrak{B}_0)}{U^1(\mathfrak{B}_0)},
\]
and $\rho^{w^{-1}}\vert U(\mathfrak{B}_0)^{w^{-1}}\cap U(\mathfrak{B})^d$ corresponds to the inflation of the representation $\rho^{w^{-1}}$ of $\mathbb{L}_{\mathfrak{B}_0}(k_E)$. The representation $\rho$ of $\mathbb{L}_{\mathfrak{B}_0}(k_E)$ being $\rho_0^{\otimes e_0}$, we have that $\rho^{w^{-1}}$ is $\mathbb{L}_{\mathfrak{B}_0}(k_E)$-equivalent to $\rho$. It then follows that
\[
\Hom_{U(\mathfrak{B}_0)^{w^{-1}}\cap U(\mathfrak{B})^d}(\rho^{w^{-1}}, \tau ^d)=\Hom_{\mathbb{L}_{\mathfrak{B}_0}(k_E)}(\rho, (\tau ^d)_{U^1(\mathfrak{B}_0)^{w^{-1}}\cap U(\mathfrak{B})^d}).
\]
Upon choosing an isomorphism $\pi_E^{a_i-a_j}\mathfrak{o}_E/\pi_E^{a_i-a_j}\mathfrak{p}_E\xrightarrow{\sim} k_E$ for each $i,j$, we identify the group $U(\mathfrak{B})^d/U^1(\mathfrak{B})^d$ with $\mathbb{L}_{\mathfrak{B}}(k_E)$. Accordingly, $\tau^d$ corresponds with the inflation of the representation $\tau$ of $\mathbb{L}_{\mathfrak{B}}(k_E)$ to $U(\mathfrak{B})^d$. Then $(\tau^d)_{{U^1(\mathfrak{B}_0)^{w^{-1}}\cap U(\mathfrak{B})^d}}$ is the space of $\mathbb{U}(w^{-1},d)$-coinvariants of $\tau^d$, where 
\[
\mathbb{U}(w^{-1},d)=\frac{U^1(\mathfrak{B}_0)^{w^{-1}}\cap U(\mathfrak{B})^d}{U^1(\mathfrak{B}_0)^{w^{-1}}\cap U^1(\mathfrak{B})^d}.
\]
Let us denote by $\overline{w}^{-1}$ the image of $w^{-1}$ under the natural projection 
\[
 N_{\mathbb{G}(k_E)}(\mathbb{L}_{\mathfrak{B}_0}(k_E))/C_{\mathbb{G}(k_E)}(Z(\mathbb{L}_{\mathfrak{B}_0}(k_E)))\longrightarrow N_{\mathbb{G}(k_E)}(\mathbb{L}_{\mathfrak{B}}(k_E))/C_{\mathbb{G}(k_E)}(Z(\mathbb{L}_{\mathfrak{B}}(k_E))).
\]
It follows readily that
\[
\mathbb{U}(w^{-1},d)=\mathbb{U}(1,d)^{\overline{w}^{-1}}=\left(\frac{U^1(\mathfrak{B}_0)\cap U(\mathfrak{B})^d}{U^1(\mathfrak{B}_0)\cap U^1(\mathfrak{B})^d}\right)^{\overline{w}^{-1}}.
\]
By the invariance of $\rho$ by $\overline{w}^{-1}$, we have
\[
\Hom_{\mathbb{L}_{\mathfrak{B}_0}(k_E)}(\rho, (\tau^d)_{\mathbb{U}(1,d)^{\overline{w}^{-1}}})=\Hom_{\mathbb{L}_{\mathfrak{B}_0}(k_E)}(\rho, (\tau^d)_{\mathbb{U}(1,d)})=\Hom_{U(\mathfrak{B}_0)\cap U(\mathfrak{B})^d}(\rho, \tau ^d).
\]
Then, we may assume that $x\in D(\mathfrak{B}_0)$. In addition, we have
\[
\Hom_{U(\mathfrak{B}_0)\cap U(\mathfrak{B})^x}(\rho, \tau ^x)=\Hom_{U(\mathfrak{B}_0)\cap U(\mathfrak{B})^{xw}}(\rho, \tau ^{xw})
\]
for all $w\in W(\mathfrak{B}_0)\cap U(\mathfrak{B})$. This allows us to replace $x$ with its conjugate under any $w\in W(\mathfrak{B}_0)\cap U(\mathfrak{B})$. Consequently, we can assume that $x$ satisfies condition \ref{cond:D}, which completes the proof of the lemma.  
\end{proof}

\begin{rem}
    In the particular case where $\tau$ is the inflation of the generalized Steinberg representation ${\rm St}(\mathfrak{B},\rho)$ of $\mathbb{L}_{\mathfrak{B}}(k_E)$ with cuspidal support $(\mathbb{L}_{\mathfrak{B}_0},\rho)$, the result above is established in \cite[Lemma XI.2.3]{BS2017type}. 
\end{rem}

\section{Broussous-Schneider Coefficient Sysyems}\label{sec:BScoeff}

In this section, we review the Broussous-Schneider coefficient systems and establish the acyclicity of the derived chain complexes for admissible representations. This construction provides a geometric realization of Bushnell-Kutzko's simple types, and we begin with a brief overview of these types. 

\subsection{}Let us fix a simple stratum $[\mathfrak{A}_0,n,0,\beta]$ in $A$ (cf. \cite[\S\,1]{bushnell1993admissible}), where $\beta$ is a certain element in $A$ such that $E=F[\beta]$ is a field and $\mathfrak{A}_0=\mathfrak{A}(\mathfrak{B}_0)$ is a principal $\mathfrak{o}_F$-order in $A$ normalized by $E$. For what follows, we view $E$ as an $F$-subalgebra of $A$. We also fix a continuous character $\psi_F$ of $F$ which is null on $\mathfrak{p}_F$ but not on $\mathfrak{o}_F$. For any $\mathfrak{o}_F$-order $\mathfrak{A}$ in $A$ normalized by $E$, Bushnell-Kutzko's theory of types produces a pair of open compact subgroups $H^1(\beta, \mathfrak{A})\subseteq J^1(\beta, \mathfrak{A})$ of $G$, and a set $\mathcal{C}(\mathfrak{A},0,\beta)$ of characters of $H^1(\beta,\mathfrak{A})$, called simple characters. We note that if $E=F$, then $H^1(\beta,\mathfrak{A})=J^1(\beta,\mathfrak{A})=U^1(\mathfrak{A})$, and $\mathcal{C}(\mathfrak{A},0,\beta)$ consists only of the trivial character. Since the element $\beta$ will be fixed throughout, we often omit it from notation, writing simply $H^1(\mathfrak{A})$ and $J^1(\mathfrak{A})$. Moreover, as we mainly work with hereditary orders normalized by $E$ in $A$, we denote $H^1(\mathfrak{B})$ and $J^1(\mathfrak{B})$ in place of $H^1(\mathfrak{A}(\mathfrak{B}))$ and $J^1(\mathfrak{A}(\mathfrak{B}))$, when there is no ambiguity. 

By the theory of Heisenberg representations, given $\theta\in\mathcal{C}(\mathfrak{A}(\mathfrak{B}),0,\beta)$, there is a unique irreducible representation $\eta(\theta)$ of $J^1(\mathfrak{B})$ containing $\theta$ (cf., \cite[\S\,5.1]{bushnell1993admissible}). Moreover, if $J(\mathfrak{B})=U(\mathfrak{B})J^1(\mathfrak{B})$, then there is an irreducible representation $\kappa$ of $J(\mathfrak{B})$ such that $\kappa\mid J^1(\mathfrak{B})= \eta(\theta)$ and it is intertwined by $B^\times$. Representations with these properties are called $\beta$-extensions. Although not unique, two $\beta$-extensions differ only by a character of the form $\chi\circ \det_B$, for some character $\chi$ of $U(\mathfrak{o}_E)/U^1(\mathfrak{o}_E)$. (See \cite[\S\,5.2]{bushnell1993admissible}.)

A simple type $(J,\lambda)$ is constructed as follows: take $J=J(\mathfrak{B}_0)$, let $\kappa$ be a $\beta$-extension of a simple character $\theta_0\in \mathcal{C}(\mathfrak{A}_0,0,\beta)$, and define $\lambda=\kappa\otimes\rho$, where $\rho$ is the inflation to $J$ of $(\rho_0)^{\otimes e_0}$, with $\rho_0$ an irreducible cuspidal representation of $\GL_f(k_E)$. 

\subsection{} We fix a simple character $\theta_0\in \mathcal{C}(\mathfrak{A}_0,0,\beta)$, a $\beta$-extension $\kappa$, and a simple type $(J,\lambda=\kappa\otimes\rho)$ containing $\kappa$. A notable property of the sets $\mathcal{C}(\mathfrak{A},0,\beta)$ is the \emph{transfer property}, which allows us to interact with various $\mathfrak{o}_F$-orders and even various $E$-vector spaces. To be more specific, let $V_1,$ $V_2$ be finite-dimensional $E$-vector spaces, and let $\mathfrak{B}_i$ be a hereditary $\mathfrak{o}_E$-order in $B_i={\rm End}_{E}V_i$, $i=1,2$. Let $\mathfrak{A}_i$ be the hereditary $\mathfrak{o}_F$-order in $A_i={\rm End}_FV_i$ such that $\mathfrak{B}_i=B_i\cap \mathfrak{A}_i$. Then there exists a canonical bijection (cf. \cite[\S\,3.6]{bushnell1993admissible})
\[
\tau_{\mathfrak{A}_1,\mathfrak{A}_2,\beta}\colon \mathcal{C}(\mathfrak{A}_1,0,\beta)\longrightarrow \mathcal{C}(\mathfrak{A}_2,0,\beta).
\]
In the case where $V_1=V_2$, this bijection is characterized by the property that $\theta$ and $\tau_{\mathfrak{A}_1,\mathfrak{A}_2,\beta}(\theta)$ coincide in $H^1(\mathfrak{A}_1)\cap H^1(\mathfrak{A}_2)$ for all $\theta\in\mathcal{C}(\mathfrak{A}_1,0,\beta)$. With this in hand, we have a map $\Theta_0$ from the set of hereditary $\mathfrak{o}_F$-orders normalized by $E$ to simple characters, by setting 
\[
\Theta_0(\mathfrak{A}(\mathfrak{B}))=\Theta_0(\mathfrak{B})=\tau_{\mathfrak{A}_0,\mathfrak{A},\beta}(\theta_0).
\]
This map is called a potencial simple character, for short ps-character. Note that the ps-character $\Theta_0$ takes hereditary $\mathfrak{o}_F$-orders normalized by $E$ in any finite-dimensional $E$-vector space. In this generality, for each order $\mathfrak{A}(\mathfrak{B})$, we let $\eta(\mathfrak{B})=\eta(\Theta_0,\mathfrak{B})$ denote the unique irreducible representation of $J^1(\mathfrak{B})$ containing $\Theta_0(\mathfrak{B})$.

\subsection{} Returning to our fixed vector space $V$, we consider the following subset of the Bruhat-Tits building $X$ of $G$:
\[
X[E]\coloneqq \bigcup_{g\in G} g X_E.
\]
Recall that the building $X_E$ of $G_E$ is viewed as a subset of $X$ via the unique affine $G_E$-equivariant map $X_E\to X$. The set $X[E]$ is endowed with the simplicial structure where the simplices are the subsets of the form $g\tilde\sigma$ for $\tilde\sigma$ a simplex in $X_E$ and $g\in G$. 

Now, we consider a representation $\mathcal{V}$ in the single Bernstein block $\mathcal{R}_{(J,\lambda)}(G)$ associated with $(J,\lambda)$, which we will henceforth assume to be admissible. For any simplex $\sigma$ in $X[E]$, Broussous-Schneider define the vector space $\mathcal{V}[\sigma]$ as follows. If $\sigma=h\tilde\sigma$, where $\tilde\sigma$ is a simplex in $X_E$ corresponding to a hereditary $\mathfrak{o}_E$-order $\mathfrak{B}$ in $B$, then
\begin{equation}\label{eq:coefsys}
\mathcal{V}[\sigma] \coloneqq \sum_{g\in U(\mathfrak{A}(\mathfrak{B}))/U^1(\mathfrak{A}(\mathfrak{B}))}hg\mathcal{V}^{\eta(\mathfrak{A}(\mathfrak{B}))},
\end{equation}
where $\eta(\mathfrak{A}(\mathfrak{B}))=\Ind_{J^1(\mathfrak{B})}^{U^1(\mathfrak{A})}\eta(\mathfrak{B})$. If $\sigma$ and $\tau$ are simplices in $X[E]$ such that $\sigma\subseteq\tau$, then $\mathcal{V}[\tau]\subseteq\mathcal{V}[\sigma]$. As a result, by taking inclusions as transition maps, the map $\sigma\mapsto \mathcal{V}[\sigma]$ gives rise to a coefficient system $\mathcal{C}[\Theta_0, V, \mathcal{V}]$ on $X[E]$. 

To describe the support of $\mathcal{C}[\Theta_0, V, \mathcal{V}]$, let $L/E$ be the unramified extension of degree $f$ in $B$ that normalizes $\mathfrak{B}_0$, as in \ref{ss:weyl:intertw}. In the same way as above, we define the complex 
\[
X[L]\coloneqq \bigcup_{g\in G} g X_L\subseteq X.
\]
Since $L/E$ is unramified, the inclusion $X_L\to X_E$, and consequently $X[L]\to X[E]$, is simplicial. The support of $\mathcal{C}[\Theta_0, V, \mathcal{V}]$ is precisely $X[L]$, i.e., for any simplex $\sigma$ in $X[E]$, we have $\mathcal{V}[\sigma]\not=0$ if and only if $\sigma\in X[L]$ (cf. \cite[Proposition VIII.2.6]{BS2017type}). We therefore view $\mathcal{C}[\Theta_0, V, \mathcal{V}]$ as a coefficient system on $X[L]$. 

With the representation $\mathcal{V}$ fixed, we can construct coefficient systems as above for another ps-characters $\Theta'$. An interesting aspect here is that this construction localizes, in some sense, $\Theta_0$ up to endo-equivalence. To be more precise, let $[\mathfrak{A}',0,\beta']$ be an arbitrary simple stratum in $A$ and take $\theta'\in\mathcal{C}(\mathfrak{A}',0,\beta')$. Let $E'=F[\beta']$, and let $V'$ denote the space $V$ regarded as an $E'$-vector space. The character $\theta'$ defines a ps-character $\Theta'$, which produces a simple character $\Theta'(\mathfrak{B}')\in\mathcal{C}(\mathfrak{A}(\mathfrak{B}'),0,\beta')$ for each hereditary $\mathfrak{o}_{E'}$-order in $B'={\rm End}_{E'}V'$. In this way, the corresponding coefficient system $\mathcal{C}[\Theta', V', \mathcal{V}]$ is non-zero if and only if $\Theta_0$ and $\Theta'$ are endo-equivalent; that is, if and only if $[E:F]=[E':F]$ and the characters $\Theta_0(\mathfrak{B})$ and $\Theta'(\mathfrak{B}')$ intertwine in $G$ for all $\mathfrak{B}$ and $\mathfrak{B}'$. Furthermore, in this case, we have $X[E]=X[E']$ and $\mathcal{C}[\Theta_0, V, \mathcal{V}]=\mathcal{C}[\Theta', V', \mathcal{V}]$ (cf. \cite[Proposition VIII.1.2]{BS2017type}).  

\subsection{Relation to Dat's Coefficient Systems}\label{ss:dat} Before proceeding further, we note a relationship between $\mathcal{C}$ and the coefficient system constructed by Dat in the ongoing work \cite{dat2024}. In [\emph{loc.\,cit.}], a coefficient system is constructed for every smooth representation of any connected reductive group which is tamely ramified over $F$, with certain additional conditions on the residual characteristic $p$ of $F$ (see \S\,2.3 hypothesis (H1) and \S\,2.4.5 hypothesis (H2), [\emph{loc.\,cit.}]). As we shall see, this construction reproduces the Broussous-Schneider coefficient systems $\mathcal{C}$, in the context of Bernstein blocks associated with simple types of $\GL_N$. We note that the latter does not require any tame assumption on $p$.

Let $\mathbf{G}$ be a connected reductive group defined over $F$ that splits over a tamely ramified extension $E/F$, and let $G={\bf G}(F)$. Denote by $\hat{\bf G}$ the Langlands dual of $\bf G$ over $\mathbb{C}$ and by $^L{\bf G}=\hat{\bf G}\rtimes W_F$ the Weil form of the $L$-group of ${\bf G}$. Let $W_F'$ be the Weil-Deligne group of $F$ and $\Phi(G)\subset H^1(W_F',\hat{\bf G})$ be the set of admissible Langlands parameters (see \cite[\S\,10]{kottwitz1984}). 

The framework of \cite{dat2024} lies in the expectation of a decomposition
\begin{equation*}
\mathcal{R}(G)=\prod_{\phi\in\Phi(P_F,G)}\mathcal{R}^{\phi}(G),
\end{equation*}
where $P_F$ denotes the wild inertial subgroup of $W_F$ and $\Phi(P_F,G)$ is the image of $\Phi(G)$ by the restriction map $H^1(W_F',\hat{\bf G})\to H^1(P_F,\hat{\bf G})$. The category $\mathcal{R}^{\phi}(G)$ is anticipated to be a Serre subcategory where the simple objects are the irreducible representations whose Langlands parameter $\varphi_\pi$ satisfies $\varphi_{\pi\vert P_F}\sim \phi$.
 
Dat's approach is as follows. For $\phi\colon P_F\to{}^L{\bf G}\in\Phi(P_F,G)$, let $Z(C_{\hat{\bf G}}(\phi))^\circ$ be the connected center of the centralizer of $\phi(P_F)$ in $\hat{\bf G}$. Then $\hat{\bf L}_{\phi}\coloneqq C_{\hat{\bf G}}(Z(C_{\hat{\bf G}}(\phi))^\circ)$ is a Levi subgroup of $\hat{\bf G}$. If $\varphi\colon W_F\to{}^L{\bf G}$ extends $\phi$, then the conjugation action ${\rm Ad}_\varphi$ of $W_F$ on $C_{\hat{\bf G}}(\phi)$ preserves its connected center and therefore also $\hat{\bf L}_{\phi}$. The outer action ${\rm Ad}_\varphi\colon W_F\to {\rm Out}(\hat{\bf L}_{\phi})$ is finite and independent of the choice of $\varphi$, hence there exists a quasi-split group ${\bf L}_{\phi}$ over $F$ endowed with a $W_F$-equivariant isomorphism of based root data $\psi_0({\bf L}_\phi)\xrightarrow{\ \sim\ }\psi_0(\hat{\bf L}_{\phi})^{\vee}$. Let $\hat{\bf L}_{\phi,{\rm ab}}$ be the cocenter of $\hat{\bf L}_{\phi}$ and denote by ${\bf S}_\phi$ the $F$-torus dual to the complex torus $\hat{\bf L}_{\phi,{\rm ab}}$ (with its $W_F$-action). The embedding $Z(\hat{\bf L}_{\phi})^\vee\subset \hat{\bf G}$ gives rise to a dual ${\bf G}$-conjugacy class of embeddings ${\bf S}_\phi\hookrightarrow {\bf G}$ (cf., \cite{dat2024}), which we denote by $I_\phi$. According to [\emph{loc.\,cit.}], there is an $F$-rational embedding $\iota\in I_\phi$ such that ${\bf G}_{\iota}\coloneqq C_{\bf G}(\iota({\bf S}_\phi))$ is naturally isomorphic to ${\bf L}_\phi$. 

If ${\bf S}$ is a maximal $F$-torus of ${\bf G}$ that splits over some tamely ramified finite extension $E$ of $F$, we put $\mathcal{A}({\bf G}, {\bf S}, F)\coloneqq\mathcal{A}({\bf G}, {\bf S}, E)\cap \mathcal{B}({\bf G}, F)$, where $\mathcal{A}({\bf G}, {\bf S}, E)\cap \mathcal{B}$ is the corresponding apartement in the enlarged Bruhat-Tits building $\mathcal{B}({\bf G}, E)$.  Given an embedding $\iota\in I_\phi$, we consider the following subset of $\mathcal{B}({\bf G}, F)$:
\begin{equation}\label{eq:B:iota}
\mathcal{B}_\iota \coloneqq \bigcup_{{\bf S}\subset {\bf G}_\iota} \mathcal{A}({\bf G}, {\bf S}, F) 
\end{equation}
where $\bf S$ runs over tamely ramified maximal $F$-tori of ${\bf G}_\iota$. 

Let $I\subset I_\phi$ be a $G$-conjugacy class of $F$-rational embeddings. Let $x\in \mathcal{B}({\bf G}, F)$ and write $I_x=\left\lbrace \iota\in I : x\in\mathcal{B}_\iota\right\rbrace$. If $I_x$ is not empty, Dat assigns to each $\iota\in I_x$ an open compact subgroup $\overrightarrow{\bf G}_{\iota,x}^+$ of $G$, under certain conditions on $p$, as follows. Let $I_F^r$, for $r\in\mathbb{R}_{\geq 0}$, denote the ramification subgroups of $W_F$ in the upper numbering and put $I_F^{r+}\coloneqq \overline{\bigcup_{s>r}I_F^s}$. We also use the notation $\widetilde{\mathbb{R}}\coloneqq \mathbb{R} \sqcup \left\lbrace r+, r\in\mathbb{R}\right\rbrace$, ordered by letting $r< r+< s$ for any $r<s \in\mathbb{R}$. For each $r\in\widetilde{\mathbb{R}}_{>0}$, we set $\hat{\bf G}_{\phi,r}\coloneqq C_{\hat{\bf G}}(\phi(I_F^r))^\circ$ and $\hat{\bf S}_{\phi,r}\coloneqq \hat{\bf G}_{\phi, r, {\rm ab}}$, the cocenter of $ \hat{\bf G}_{\phi, r}$. There is a well-defined outer action $W\longrightarrow \Out(\hat{\bf G}_{\phi, r})$, defining a quasi-split reductive $F$-group ${\bf G}_{\phi, r}$ and a dual $F$-torus ${\bf S}_{\phi, r}$ with a canonical isomorphism $\hat{\bf S}_{\phi,r}\to Z({\bf G}_{\phi, r})^\circ$ (cf., \cite{dat2024}). The inclusion $\hat{\bf G}_{\phi}\subset \hat{\bf G}_{\phi, r}$ induces an $F$-rational embedding ${\bf S}_{\phi, r}\hookrightarrow {\bf S}_\phi$. We denote by $0<r_0<\cdots < r_{d-1}$ the jumps of the decreasing filtration $({\bf S}_{\phi, r})_{r>0}$ of ${\bf S}_\phi$: 
\[
\left\lbrace r_0, \ldots, r_{d-1}\right\rbrace =  \left\lbrace r>0, {\bf S}_{\phi, r+}\subsetneq {\bf S}_{\phi, r}\right\rbrace = \left\lbrace r>0, C_{\hat{\bf{G}}}(\phi(I_F^{r+}))^\circ \supsetneq C_{\hat{\bf{G}}}(\phi(I_F^r))^\circ \right\rbrace.
\] 
We also put $r_{-1}\coloneqq 0$ and $r_d\coloneqq {\rm depth} (\phi) \coloneqq \inf \left\lbrace r>0, \phi(I_F^r)=\lbrace 1\rbrace \right\rbrace$. Now fix an $F$-rational embedding $\iota \colon {\bf S}_\phi \hookrightarrow {\bf G}$ and put 
\[
{\bf G}_\iota ^i \coloneqq C_{\bf G}(\iota ({\bf S}_{\phi, r_i})) \mbox{ for } i=0,\ldots, d-1 \mbox{ and } {\bf G}_\iota^d \coloneqq {\bf G}.  
\]
Let us fix $x\in \mathcal{B}_{\iota}$. We consider the Moy-Prasad filtration $\lbrace G_{x,r}={\bf G}(F)_{x,r}, r\in\widetilde{\mathbb{R}}_{\geq 0}\rbrace$ of the stabilizer $G_x={\bf G}(F)_x$ \cite{moy1994unrefined}. For each $i=0,\ldots, d$, we put 
\[
G_{\iota, x, r}^i \coloneqq G_{x,r}\cap {\bf G}_\iota^i(F).
\]
This procedure yields the group $\overrightarrow{\bf G}_{\iota,x}^+$ defined by 
\[
\overrightarrow{\bf G}_{\iota,x}^+\coloneqq G_{\iota, x, 0+}^0 G_{\iota, x, (r_0/2)+}^1\cdots G_{\iota, x, (r_{d-1}/2)+}^d.
\]
Following Yu \cite{yu2001construction}, the parameter $\phi$ determines a character $\check{\phi}_{\iota,x}^+$ of $\overrightarrow{\bf G}_{\iota,x}^+$ (cf., \cite{dat2024}). 
Let $e_{\iota,x}^\phi$ be the idempotent in the Hecke algebra $\mathcal{H}(G)$ corresponding to the character $\check{\phi}_{\iota,x}^+$. For all $g\in G$, we have $(\overrightarrow{\bf G}_{\iota,x}^+)^g=\overrightarrow{\bf G}_{g\iota,gx}^+$ and 
\[
(e_{\iota,x}^\phi)^g = e_{g\iota, gx}^\phi.
\]
We now define the idempotent attached to $x$ by 
\begin{equation}\label{eq:e_x}
e_x=e_{I,x}^\phi\coloneqq\sum_{\iota\in I_x/\sim_x} e_{\iota,x}^\phi,
\end{equation}
where the relation $\sim_x$ is such that $\iota\sim_x\iota'$ if $\iota'\in \overrightarrow{\bf G}_{\iota,x}\cdot \iota$. If $I_x$ is empty, we take $e_x$ to be zero. The idempotent $e_x$ only depends on the image of $x$ in the reduced building $\mathcal{B}'$ of $G$. Thus, we may define $e_x$ unambiguously for any $x\in\mathcal{B}'$. Let us denote by $\mathcal{B}_\iota'$ the image of $\mathcal{B}_\iota$ in $\mathcal{B}'$. Then, by construction, we see that 
\[
e_x\not=0 \quad \mbox{if and only if}\quad x\in G\cdot \mathcal{B}_\iota'.
\]
Now, as noted in [\emph{loc.\,cit.}], the idempotents $e_x$ are not constant on the usual facets of $\mathcal{B}'$, and to this it is necessary to refine the polysimplicial structure of $\mathcal{B}'$. One way to do this is by taking sufficiently large subdivisions as in [\emph{loc,\,cit.}, 2.7.2]. Actually, according to [\emph{loc,\,cit.}, Lemma 2.7.3], if $E/F$ is a tamely ramified field extension such that ${\bf G}_\iota$ is $E$-split and such that 
\begin{equation}\label{eq:cond:r_i:E}
\lbrace r_0/2, \ldots, r_{d-1}/2\rbrace \subset v(E^\times),
\end{equation}
where $v$ is the unique valuation on $E$ extending the normalized valuation on $F$, then the idempotents $e_x$ are constant on $e$-facets, where $e$ is the ramification index of $E/F$. Thus, if we denote by $\mathcal{B}'_{\bullet/e, I,\phi}$ the polysimplicial complex structure on $G\cdot \mathcal{B}'_\iota$ induced by the intersection of the $e$-facets of $\mathcal{B}'$ with $G\cdot \mathcal{B}'_\iota$, we obtain a coefficient system $\sigma\mapsto e_\sigma\mathcal{V}$ over $\mathcal{B}'_{\bullet/e,I,\phi}$ for each smooth complex $G$-module $\mathcal{V}$.

Our aim is to compare this coefficient systems with those defined by Broussous-Schneider $\sigma\mapsto \mathcal{V}[\sigma]$ when $\bf G=\GL_N$. In this case, the working assumption of Dat is that $p\nmid N$. 
\begin{lem*}
    Let $\bf G$ be $\GL_N$ and assume that $p\nmid N$. Let $(J,\lambda)$ be a simple type of $G$. Let $\phi$ be the image in $\Phi(P_F,G)$ of the Langlands parameter $\varphi\in H^1(W_F, \hat{\bf G})$ of any irreducible representation in $\mathcal{R}_{(J,\lambda)}(G)$. Let $I$ be the $G$-conjugacy class of an $F$-rational embedding $\iota\colon {\bf S}_\phi \hookrightarrow {\bf G}$ such that ${\bf G}_\iota$ is naturally isomorphic to ${\bf L}_\phi$. Then: 
    \begin{itemize}
        \item[(i)] As polysimplical complexes, we have 
        \[
        X[E] = \mathcal{B}'_{\bullet/e, I, \phi},
        \]
        where $E=F[\beta]$ and $e$ is the ramification index of $E/F$;
        \item[(ii)] For each simplex $\sigma$, we have 
        \[
        \mathcal{V}[\sigma] = e_x \mathcal{V}, 
        \]
        for all $x$ in the interior of $\sigma$ and for every $\mathcal{V}\in\mathcal{R}_{(J,\lambda)}(G)$. In particular, the spaces $e_x\mathcal{V}$ are constant on $e$-facets.
    \end{itemize}
\end{lem*}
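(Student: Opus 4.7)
The plan is to build a dictionary between Dat's Yu-type construction and the Bushnell-Kutzko simple-type data underlying $\mathcal{C}$, and then verify the geometric and representation-theoretic halves separately.

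For part (i), I would first identify ${\bf S}_\phi$, $\iota$, and ${\bf G}_\iota$ with concrete objects attached to the simple type. Writing $E=F[\beta]$, the Langlands parameter of any irreducible in $\mathcal{R}_{(J,\lambda)}(G)$ restricts on wild inertia to a $\phi$ whose centralizer datum identifies, via the duality ${\bf S}_\phi \leftrightarrow Z(\hat{\bf L}_\phi)$, with the embedding $E^\times \hookrightarrow G$; hence ${\bf G}_\iota = C_{\bf G}(\iota({\bf S}_\phi))$ has $F$-points $G_E = \GL_{N/[E:F]}(E)$. Under this identification, the canonical affine embedding $\mathcal{B}({\bf G}_\iota, F) \hookrightarrow \mathcal{B}({\bf G}, F)$ is the map $j_E$ of \cite{broussous2002building}, so $\mathcal{B}_\iota = j_E(X_E)$ and $G\cdot \mathcal{B}_\iota' = X[E]$ as sets.

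For the polysimplicial refinement, I would compute the jumps $r_0 < \cdots < r_{d-1}$ of $({\bf S}_{\phi, r})$ in terms of the simple-stratum invariants of $\beta$. Since these breaks are controlled by the normalized $E$-valuations of $\beta$ and its successive approximations, they lie in $\tfrac{1}{e}\mathbb{Z}$ with $e=e(E/F)$, so $E$ itself verifies \eqref{eq:cond:r_i:E}. With this value of $e$, Dat's $e$-facets meeting $X[E]$ are indexed by hereditary $\mathfrak{o}_E$-orders $\mathfrak{B}$ in $B$ via the anti-isomorphism $\Her(A)^{\mathrm{opp}} \to F(X)$, which is exactly the simplicial structure Broussous-Schneider place on $X[E]$. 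This settles (i).

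For part (ii), I would match the data $(\overrightarrow{\bf G}_{\iota,x}^+, \check\phi_{\iota,x}^+)$ defining $e_x$ with $(J^1(\mathfrak{B}), \eta(\mathfrak{B}))$. Applied to the twisted Levi sequence obtained from $\phi$ on the tower $G_E \subset G$ at the point $x$ corresponding to $\mathfrak{B}$, the Moy-Prasad filtrations cut out at the half-depths $r_i/2$ recover precisely $J^1(\mathfrak{B})$, while Yu's construction gives a character $\check\phi_{\iota,x}^+$ that extends the Bushnell-Kutzko simple character $\Theta_0(\mathfrak{B})$; its Heisenberg lift is $\eta(\mathfrak{B})$, so the $\check\phi_{\iota,x}^+$-isotypic subspace of $\mathcal{V}$ equals $\mathcal{V}^{\eta(\mathfrak{B})}$. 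The quotient $I_x/\sim_x$ parameterizes $\overrightarrow{\bf G}_{\iota,x}$-conjugacy classes of embeddings into ${\bf G}_\iota$ through $x$, which by a routine coset computation correspond bijectively to the quotient $U(\mathfrak{A}(\mathfrak{B}))/U^1(\mathfrak{A}(\mathfrak{B}))$; summing the corresponding translates of $\mathcal{V}^{\eta(\mathfrak{B})}$ reconstructs \eqref{eq:coefsys}, using that $\mathcal{V}^{\eta(\mathfrak{A}(\mathfrak{B}))} = \sum_g g\,\mathcal{V}^{\eta(\mathfrak{B})}$ by compact induction and Frobenius reciprocity.

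The main obstacle I anticipate is the precise identification of Yu's character $\check\phi_{\iota,x}^+$ with the Bushnell-Kutzko simple character $\Theta_0(\mathfrak{B})$. This is where the assumption $p \nmid N$ is essential: it guarantees that the tame construction of \cite{kim2017construction} is available and that its cuspidal datum matches, via the local Langlands correspondence for $\GL_N$, the simple character produced by \cite{bushnell1993admissible}. One may either invoke an explicit compatibility proven in \cite{dat2024} or trace through the Artin conductor and level data of $\phi$ to verify the identification directly; the purely group-theoretic identification $\overrightarrow{\bf G}_{\iota,x}^+ = J^1(\mathfrak{B})$ should then follow from a standard comparison of Moy-Prasad filtrations with lattice-chain filtrations on $G_E \subset G$.
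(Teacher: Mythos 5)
Your overall strategy --- identify the Langlands-duality data with the Bushnell--Kutzko data via \cite{mayeux2020comparison}, then compare the resulting sums of translates --- matches the paper's approach, and the set-theoretic identification $G\cdot\mathcal{B}'_\iota = X[E]$ is correct. However there are two concrete errors.

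First, your claim that ``$E$ itself verifies \eqref{eq:cond:r_i:E}'' is false. The jumps $r_i$ do lie in $v(E^\times)=\tfrac{1}{e}\mathbb{Z}$, but \eqref{eq:cond:r_i:E} asks for $r_i/2$ to lie there, which in general requires a ramified quadratic extension of $E'$. This is exactly the point of the Remark following the lemma: the idempotents $e_x$ need only be constant on $2e$-facets, not on $e$-facets. The assertion that the \emph{spaces} $e_x\mathcal{V}$ are constant on $e$-facets for $\mathcal{V}\in\mathcal{R}_{(J,\lambda)}(G)$ is a \emph{consequence} of proving part (ii) directly; deducing it via Dat's Lemma 2.7.3 together with \eqref{eq:cond:r_i:E} is circular and, moreover, unavailable. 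The route the paper takes for part (i), independent of \eqref{eq:cond:r_i:E}, is simply \cite[Theorem 1.1(A)]{broussous2002building}, which identifies the simplicial structure of $X_E$ with the intersection of the $e$-th subdivision of $X$ with $X_E$.

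Second, your identification of Dat's datum is at the wrong level of the Bushnell--Kutzko tower: what \cite{mayeux2020comparison} gives is $\overrightarrow{\bf G}_{\iota,x}^+ = H^1(\mathfrak{B})$ and $\check\phi_{\iota,x}^+ = \Theta_0(\mathfrak{B})$, not $J^1(\mathfrak{B})$ and not a character merely ``extending'' $\Theta_0(\mathfrak{B})$ (note $\check\phi_{\iota,x}^+$ is a character, while $\eta(\mathfrak{B})$ is typically of dimension greater than $1$). The equality $\mathcal{V}^{\Theta_0(\mathfrak{B})}=\mathcal{V}^{\eta(\mathfrak{B})}$ then has to be established separately: the paper uses that $\eta(\mathfrak{B})$ is $\Theta_0(\mathfrak{B})$-isotypic and that $\Ind_{H^1(\mathfrak{B})}^{J^1(\mathfrak{B})}\Theta_0(\mathfrak{B})$ is $\eta(\mathfrak{B})$-isotypic. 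One also needs that $\eta(\mathfrak{A}(\mathfrak{B}))=\Ind_{J^1(\mathfrak{B})}^{U^1(\mathfrak{A}(\mathfrak{B}))}\eta(\mathfrak{B})$ is irreducible, which follows from the intertwining formula $I_G(\eta(\mathfrak{B}))=J^1(\mathfrak{B})G_EJ^1(\mathfrak{B})$ and its intersection with $U^1(\mathfrak{A}(\mathfrak{B}))$; then $\mathcal{V}^{\eta(\mathfrak{A}(\mathfrak{B}))}=U^1(\mathfrak{A}(\mathfrak{B}))\cdot\mathcal{V}^{\eta(\mathfrak{B})}$ and the sum over $U(\mathfrak{A}(\mathfrak{B}))/U^1(\mathfrak{A}(\mathfrak{B}))$ collapses to \eqref{eq:coeff:dat} as required.
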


\begin{proof}
First, as shown in \cite{bushnell2003localIV}, the wild inertial parameter $\phi$ is well-defined and depends only on the endo-class $\Theta_0$ contained in $(J,\lambda)$. With this $\phi$, we have \[
{\bf S}_\phi=\Res_{E/F}\mathbb{G}_m,\quad {\bf L}_{\phi}=\Res_{E/F}\GL_{N/[E:F]},
\]
and $I$ is the conjugacy class of the embedding $\iota\colon \Res_{E/F}\mathbb{G}_m\hookrightarrow \GL_N$ yielding the fixed embedding $E^\times\hookrightarrow G$. As above, we denote by $X$ the reduced building of $G=\GL_N(F)$. Since ${\bf S}_\phi$ is $F$-anisotropic modulo the center $\mathbb{G}_m$ of $\GL_N$, the image of the reduced builing of ${\bf L}_\phi(F)$ in $X$ coincides with the projection to $X$ of the image of the enlarged building of ${\bf L}_\phi(F)$ in that of $\bf G$. Thus, as sets, we obtain $X[E]=G\cdot\mathcal{B}'_\iota=\mathcal{B}'_{\bullet/e, I, \phi}$. As for the simplicial structure, it is known that the simplicial structure of $X_E$ coincides with the intersection of the $e^{\rm th}$-subdivision of $X$ with $X_E$ \cite[Theorem 1.1 (A)]{broussous2002building}. Hence, since the action of $G$ is simplicial, $X[E] = \mathcal{B}'_{\bullet/e, I, \phi}$ as polysimplicial complexes.

It remains to verify (ii). To do so, by $G$-equivariance, we may take $x\in X_E$. Note that by \cite{mayeux2020comparison} we have that $\overrightarrow{\bf G}_{\iota,x}^+=H^1(\mathfrak{B})$ and $\check{\phi}_{\iota,x}^+=\Theta_0(\mathfrak{B})$, where $\mathfrak{B}$ denotes the hereditary $\mathfrak{o}_E$-order corresponding to $\sigma$. (Since $p\nmid N$, the extension $E/F$ is tamely ramified.) Moreover, for all $g\in G$, we have  $g\iota\in I_x$ if and only if $g\in U(\mathfrak{A}(\mathfrak{B}))$. Therefore, $e_{g\iota, x}^\phi=e_{g\iota, gx}^\phi=(e_{\iota, x}^\phi)^g$ for $g\iota\in I_x$, leading to
\[
e_x\mathcal{V}= \sum_{g\in U(\mathfrak{A}(\mathfrak{B}))}g\mathcal{V}^{\check{\phi}_{\iota,x}^+}=\sum_{g\in U(\mathfrak{A}(\mathfrak{B}))}g\mathcal{V}^{\Theta_0(\mathfrak{B})}.
\]
Then, assertion (ii) is equivalent to 
 \begin{equation}\label{eq:coeff:dat}
        \mathcal{V}[\sigma]=\sum_{g\in U(\mathfrak{A}(\mathfrak{B}))}g\mathcal{V}^{\Theta_0(\mathfrak{B})}.
    \end{equation}
By construction, 
\[
\mathcal{V}[\sigma] = \sum_{g\in U(\mathfrak{A}(\mathfrak{B}))/U^1(\mathfrak{A}(\mathfrak{B}))}g\mathcal{V}^{\eta(\mathfrak{A}(\mathfrak{B}))},
\]
where $\eta(\mathfrak{A}(\mathfrak{B}))=\Ind_{J^1(\mathfrak{B})}^{U^1(\mathfrak{A}(\mathfrak{B}))}\eta(\mathfrak{B})$. We observe that the $G$-intertwining of $\eta(\mathfrak{B})$ is given by $J^1(\mathfrak{B})G_EJ^1(\mathfrak{B})$ (\cite[(5.1.1)]{bushnell1993admissible}), and $
J^1(\mathfrak{B})G_EJ^1(\mathfrak{B})\cap U^1(\mathfrak{A}(\mathfrak{B}))= J^1(\mathfrak{B})$, so the representation $\eta(\mathfrak{A}(\mathfrak{B}))$ is irreducible. Hence, we obtain 
\[
\mathcal{V}^{\eta(\mathfrak{A}(\mathfrak{B}))}= U^1(\mathfrak{A}(\mathfrak{B}))\cdot\mathcal{V}^{\eta(\mathfrak{B})}.
\]
Now, since $\eta(\mathfrak{B})$ is $\Theta_0(\mathfrak{B})$-isotypic and $\Ind_{H^1(\mathfrak{B})}^{J^1(\mathfrak{B})}\Theta_0(\mathfrak{B})$ is $\eta(\mathfrak{B})$-isotypic, it follows that $\mathcal{V}^{\eta(\mathfrak{B})}=\mathcal{V}^{\Theta_0(\mathfrak{B})}$. Therefore, \eqref{eq:coeff:dat} holds, and the lemma follows.
\end{proof}

\begin{rem}
    Although the extension  $E/F$ considered above is not necessarily Galois, if  $E'/F$ denotes its Galois closure, then  $E'/E$ is unramified. In particular, the ramification indices agree: $e=e(E/F) = e(E'/F)$. The tamely ramified Galois extension $E'/F$ clearly splits the group  $\mathbf{G}_\iota = \Res_{E/F} \GL_{N/[E:F]}$; however, it does not necessarily satisfy condition~\eqref{eq:cond:r_i:E}. In fact, according to \cite{mayeux2020comparison}, if $\lbrace [\mathfrak{A}_0, 0, \tilde{r}_i, \beta_i] \mid 0 \leq i \leq s \rbrace$ is a defining sequence of the underlying simple stratum attached to  $(J,\lambda)$, then $s = d$ and  $r_i = -v(\beta_i - \beta_{i+1})$ for $i = 0, \ldots, d-1$. (In \emph{loc.\,cit.}, the valuation $v$ on $E$ extending the normalized valuation on $F$ is denoted by $\mathrm{ord}$.) It follows that  $\{ r_0, \ldots, r_{d-1} \} \subseteq v(E^\times) = v(E'^\times)$, but the set $\{ r_0/2, \ldots, r_{d-1}/2 \}$ may not lie in $v(E^\times)$. For this reason, a quadratic ramified extension of $E'$ may be required. Consequently, the idempotent $e_x$ may not be constant on $e$-facets, but rather on $2e$-facets. Nevertheless, as we have shown, the spaces $e_x \mathcal{V}$ are constant on $e$-facets for all  $\mathcal{V} \in \mathcal{R}_{(J,\lambda)}(G)$.
\end{rem}

\section{Acyclicity}\label{sec:main}
Having reviewed the definition of the coefficient system attached to $\mathcal{V}$, we now consider the chain complex derived from it and prove that it gives an exact resolution of $\mathcal{V}$ in $\mathcal{R}_{(J,\lambda)}(G)$.

\subsection{} We recall that the coefficient system $\mathcal{C}=\mathcal{C}[\Theta_0, V, \mathcal{V}]$ is defined on the simplicial complex $X[L]$, which has dimension
\[
d\coloneqq \dim X[L] = \dim_F V/[L:F] -1 = e_0-1. 
\]
For each non-negative integer $q\leq d$, we denote by $X[L]_q$ the set of $q$-simplices of $X[L]$. An ordered $q$-simplex in $X[L]$ is a sequence $(\sigma_0,\ldots, \sigma_q)$ of vertices such that $\left\lbrace \sigma_0,\ldots, \sigma_q\right\rbrace$ is a $q$-simplex. Two such ordered simplices are called equivalent if they differ by an even permutation of the vertices. The corresponding equivalence classes are called oriented $q$-simplices and are denoted by $\langle \sigma_0,\ldots, \sigma_q\rangle$. We let $X[L]_{(q)}$ be the set of oriented $q$-simplices of $X[L]$. 

The space $C_q^{\rm or}(X[L],\mathcal{C})$ of oriented $q$-chains of $X[L]$ with coefficients in $\mathcal{C}$ is the $\mathbb{C}$-vector space of all maps
\[
\omega\colon X[L]_{(q)} \longrightarrow \mathcal{V}
\]
such that 
\begin{itemize}
    \item[-]$\omega$ has finite support, 
    \item[-] $\omega(\langle \sigma_0,\ldots, \sigma_q\rangle)\in \mathcal{V}[\left\lbrace \sigma_0,\ldots, \sigma_q\right\rbrace]$, and 
    \item[-] $\omega(\langle \sigma_{\iota(0)},\ldots, \sigma_{\iota(q)}\rangle)={\rm sgn}(\iota)\cdot\omega(\langle \sigma_0,\ldots, \sigma_q\rangle)$ for any permutation $\iota$. 
\end{itemize}
The group $G$ acts smoothly on $C_q^{\rm or}(X[L],\mathcal{C})$ via 
\[
(g\omega)(\langle \sigma_0,\ldots, \sigma_q\rangle)\coloneqq g(\omega(\langle g^{-1}\sigma_0,\ldots, g^{-1}\sigma_q\rangle)).
\]
With respect to the $G$-equivariant boundary maps 
\begin{equation*}
\begin{split}
    \partial\colon C_{q+1}^{\rm or}(X[L],\mathcal{C}) & \longrightarrow   C_{q}^{\rm or}(X[L],\mathcal{C}) \\
    \omega &\longmapsto [\langle \sigma_0,\ldots, \sigma_q\rangle \mapsto \sum_{\left\lbrace \sigma, \sigma_0,\ldots, \sigma_q\right\rbrace\in X[L]_{q+1}}\omega(\langle \sigma,\sigma_0,\ldots, \sigma_q\rangle)]
\end{split}
\end{equation*}
we then have the augmented chain complex in $\mathcal{R}(G)$
\begin{equation}\label{eq:chain:complex}
C_d^{\rm or}(X[L],\mathcal{C})\xrightarrow{\ \partial\ }\cdots \xrightarrow{\ \partial\ }C_0^{\rm or}(X[L],\mathcal{C})\xrightarrow{\ \epsilon\ }\mathcal{V}
\end{equation}
where $\epsilon(\omega)=\sum_{\sigma\in X[L]_{(0)}}\omega(\sigma)\in\mathcal{V}$. 

\subsection{} By \cite[Proposition IX.2]{BS2017type}, the chain complex \eqref{eq:chain:complex} is contained within the Bernstein component $\mathcal{R}_{(J,\lambda)}(G)$. Theorem \ref{ss:main} below will establish that this complex is exact. To demonstrate this, we employ the strategy outlined in \S\,X of [\textit{loc.\,cit.}].

Let us begin by fixing a maximal hereditary $\mathfrak{o}_E$-order $\mathfrak{B}_{\rm max}$ in $B$ that contains $\mathfrak{B}_0$. Given that $L/E$ is unramified, the order $\mathfrak{C}_{\rm max}\coloneqq C\cap\mathfrak{B}_{\rm max}$ is maximal in $C={\rm End}_LV$. Following \cite[\S\,5]{bushnell1993admissible}, we consider the family of compatible $\beta$-extensions $\left\lbrace \kappa(\mathfrak{B})\right\rbrace_{\mathfrak{B}_0\subseteq\mathfrak{B}\subseteq\mathfrak{B}_{\rm max}}$, where $\mathfrak{B}$ runs over orders in $B$ between $\mathfrak{B}_0$ and $\mathfrak{B}_{\rm max}$. These representations are characterized by
\[
\Ind_{J(\mathfrak{B})}^{U(\mathfrak{B})U^1(\mathfrak{A}(\mathfrak{B}))}\kappa(\mathfrak{B})\cong \Ind_{U(\mathfrak{B})J^1(\mathfrak{B}_{\rm max})}^{U(\mathfrak{B})U^1(\mathfrak{A}(\mathfrak{B}))}\kappa(\mathfrak{B}_{\rm max}) \quad \mbox{and}\quad \kappa(\mathfrak{B}_0)=\kappa.
\]
Fix a non-negative integer $q\leq d$ and consider the set $X[L]_q$ of unoriented $q$-simplices in $X[L]$. The space $C_q(X[L]_q,\mathcal{C})$ of (unoriented) $q$-chains of $X[L]$ with coefficients in $\mathcal{C}$ is the $\mathbb{C}$-vector space of all maps $\omega\colon X[L]_q\to \mathcal{V}$ such that $\omega$ has finite support and $\omega(\sigma)\in\mathcal{V}[\sigma]$ for all $\sigma\in X[L]_q$. The group $G$ acts smoothly on $C_q(X[L]_q,\mathcal{C})$ via $(g\omega)(\sigma)=g(\omega(g^{-1}\sigma))$. 

Let $J_{\rm max}\coloneqq U(\mathfrak{B}_0)J^1(\mathfrak{B}_{\rm max})$ and let us denote by $\lambda_{\rm max}$ the irreducible representation of $J_{\rm max}$ given by
\begin{equation}
\lambda_{\rm max} \coloneqq \kappa(\mathfrak{B}_{\rm max})\otimes \rho.
\end{equation}
We denote by $\Omega_q$ the set of orbits of $J_{\rm{max}}$ in $X[L]_q$. For each orbit $\Sigma\in\Omega_q$, we define $C_q(\Sigma, \mathcal{C})\subset C_q(X[L],\mathcal{C})$ as the subspace of $q$-chains supported in $\Sigma$. As a $J_{\rm{max}}$-module, $C_q(X[L],\mathcal{C})$ decomposes as 
\begin{equation*}
    C_q(X[L],\mathcal{C})= \coprod_{\Sigma\in \Omega_q} C_q(\Sigma,\mathcal{C}).
\end{equation*}
For each $\Sigma\in \Omega_q$, there exists a hereditary $\mathfrak{o}_L$-order $\mathfrak{C}$ in $C$ with $\mathfrak{C}_{\rm min}\subseteq \mathfrak{C}\subseteq \mathfrak{C}_{\rm max}$ and an element $x\in G$ such that $\Sigma=J_{\rm{max}}x.\sigma_{\mathfrak{C}}$, where $\sigma_{\mathfrak{C}}$ is the simplex in $X[L]_q$ corresponding to $\mathfrak{C}$. We have the disjoint union 
\begin{equation*}
    \Sigma=\bigcup_{j\in J_{\rm{max}}/J_{\rm{max}}\cap U(\mathfrak{A})^x}\conj{jx\sigma_{\mathfrak{C}}},
\end{equation*}
where $\mathfrak{A}=\mathfrak{A}(\mathfrak{C})$ is the order in $A$ such that $\mathfrak{C}=C\cap \mathfrak{A}$, and the union is taken over a fixed set of representatives of $J_{\rm{max}}/J_{\rm{max}}\cap U(\mathfrak{A})^x$. Then we have an isomorphism of $J_{\rm{max}}$-modules 
\begin{equation*}
    C_q(\Sigma,\mathcal{C})=\coprod_{j\in J_{\rm{max}}/J_{\rm{max}}\cap U(\mathfrak{A})^x}jxC_q(\sigma_{\mathfrak{C}},\mathcal{C}).
\end{equation*}
The action of $U(\mathfrak{A})$ on $C_q(\sigma_{\mathfrak{C}},\mathcal{C})$ gives an action of $U(\mathfrak{A})^x$ on $xC_q(\sigma_{\mathfrak{C}},\mathcal{C})$, and we have an isomorphism of $J_{\rm max}$-modules
\begin{equation}\label{eq:dec:Cq:Sigma}
     C_q(\Sigma,\mathcal{C})= \Ind_{J_{\rm max}\cap U(\mathfrak{A})^x}^{J_{\rm max}} xC_q(\sigma_{\mathfrak{C}},\mathcal{C}).
\end{equation}
Actually, for each representative $j\in J_{\rm{max}}/J_{\rm{max}}\cap U(\mathfrak{A})^x$, the space $jxC_q(\sigma_{\mathfrak{C}},\mathcal{C})$ can be identified with the subspace of $\Ind_{J_{\rm max}\cap U(\mathfrak{A})^x}^{J_{\rm max}} xC_q(\sigma_{\mathfrak{C}},\mathcal{C})$ consisting of functions supported on the coset $jJ_{\rm{max}}\cap U(\mathfrak{A})^x$. Now, by construction, we have 
\[
C_q(\sigma_{\mathfrak{C}},\mathcal{C})= \mathcal{V}[\sigma_{\mathfrak{C}}]
\]
as $U(\mathfrak{A})$-modules. By \cite[V.5, V.10]{BS2017type}, 
\begin{equation}\label{eq:V:sigma_Sigma}
    \mathcal{V}[\sigma_{\mathfrak{C}}]=\sum_{g\in U(\mathfrak{A})/U(\mathfrak{B})J^1(\mathfrak{B}_{{\rm max}})} g\mathcal{V}^{\eta(\mathfrak{B},\mathfrak{B}_{\rm max})},
\end{equation}
where $\mathfrak{B}=B\cap\mathfrak{A}$ and $\eta(\mathfrak{B},\mathfrak{B}_{\rm max})$ is the representation of $U^1(\mathfrak{B})J^1(\mathfrak{B}_{\rm max})$ described in \cite[\S\,III.1]{BS2017type}.

\subsection{} Consider the space $\mathcal{V}^{\eta(\mathfrak{B},\mathfrak{B}_{\rm max})}$ as a $U(\mathfrak{B})J^1(\mathfrak{B}_{\rm max})$-module. (See the comments at the beginning of the proof of \cite[Lemma IX.5]{BS2017type} for the $U(\mathfrak{B})J^1(\mathfrak{B}_{\rm max})$-invariance of $\mathcal{V}^{\eta(\mathfrak{B},\mathfrak{B}_{\rm max})}$.) By \S\,XI [\textit{loc.\,cit.}], there exist irreducible representations $\tau_1,\ldots, \tau_m$ (not necessarily pairwise disjoint) which are the inflation of irreducible representations of $U(\mathfrak{B})/U^1(\mathfrak{B})$, such that
\begin{equation}\label{eq:def:tau_i}
    \mathcal{V}^{\eta(\mathfrak{B},\mathfrak{B}_{\rm max})}=\bigoplus_{i=1}^m\kappa(\mathfrak{B}_{\rm max})\otimes \tau_i.
\end{equation}
For each $i$ we have 
\begin{equation}\label{eq:prop:tau_i}
        \Hom_{U(\mathfrak{B})J^1(\mathfrak{B}_{\rm max})}(\kappa(\mathfrak{B}_{\rm max})\otimes \tau_i,\mathcal{V})= \Hom_{\mathbb{P}_{\mathfrak{B}}}(\tau_i, \mathcal{V}(\mathfrak{B}_{\rm max})_{\mathbb{U}_{\mathfrak{B}}}),
\end{equation}
where $\mathcal{V}(\mathfrak{B}_{\rm max})$ is the $U(\mathfrak{B}_{\rm max})/U^1(\mathfrak{B}_{\rm max})$-module \[
\Hom_{J^1(\mathfrak{B}_{\rm max})}(\kappa(\mathfrak{B}_{\rm max}),\mathcal{V}), 
\]
and $\mathbb{U}_{\mathfrak{B}}$ is the unipotent radical of the parabolic subgroup $\mathbb{P}_{\mathfrak{B}}(k_E)$ of $\mathbb{G}(k_E)$ given by $U(\mathfrak{B})J^1(\mathfrak{B}_{\rm max})/J^1(\mathfrak{B}_{\rm max})$ (see \ref{ss:her:ord}). Then, by \cite[\S\,5 Proposition 3]{schneider-zink1999}, the representation $\tau_i$ has cuspidal support $\rho$, seeing $U(\mathfrak{B}_0)/U^1(\mathfrak{B}_0)$ as a Levi of $U(\mathfrak{B})/U^1(\mathfrak{B})$. 

We now prove some results involving the representations $\tau_1,\ldots,\tau_m$ which will be needed later. If $\mathcal{V}$ lies in the discrete series, then, as noted in \cite[Lemma XI.1.3]{BS2017type}, we have $\mathcal{V}^{\eta(\mathfrak{B},\mathfrak{B}_{\rm max})}=\kappa(\mathfrak{B}_{\rm max})\otimes {\rm St}(\mathfrak{B}_{\rm max},\rho)$, and the corresponding results are proved (see Proposition XI.1.4, Lemma XI.2.2, and Lemma XI.2.3 [\emph{loc.\,cit.}]). Here, for completeness, we verify that their arguments apply in our more general setting. We begin by ordering the set $\lbrace \tau_i\rbrace_{1\leq i\leq m}$ so that there exists some $m'\leq m$ such that \[
\kappa(\mathfrak{B}_{\rm max})\otimes \tau_i\not\simeq \kappa(\mathfrak{B}_{\rm max})\otimes \tau_j \mbox{ for } 1\leq i<j\leq m',
\]
and, for all $m'<i\leq m$, there exists $i'\leq m'$ such that $\kappa(\mathfrak{B}_{\rm max})\otimes \tau_i\simeq \kappa(\mathfrak{B}_{\rm max})\otimes \tau_{i'}$. 

\subsubsection{}\label{prop:XI.1.4}\begin{Proposition}
    \begin{itemize}
        \item[\emph{(i)}] \emph{The $U(\mathfrak{A})$-intertwining of $\kappa(\mathfrak{B}_{\rm max})\otimes \tau_i$ is equal to $U(\mathfrak{B})J^1(\mathfrak{B}_{\rm max})$, for each $i=1,\ldots,m$.} 
            \item[\emph{(ii)}]\emph{ For each $i$, the representation of $U(\mathfrak{A})$ given by} 
        \begin{equation*}
            \lambda_i(\sigma_{\mathfrak{C}})\coloneqq\Ind_{U(\mathfrak{B})J^1(\mathfrak{B}_{\rm max})}^{U(\mathfrak{A})}\kappa(\mathfrak{B}_{\rm max})\otimes \tau_i
        \end{equation*}
        \emph{is irreducible.} 
        \item[\emph{(iii)}]\emph{ We have} 
        \begin{equation*}
            \mathcal{V}[\sigma_{\mathfrak{C}}]=\bigoplus_{i=1}^{m'}\mathcal{V}^{\lambda_i(\sigma_{\mathfrak{C}})}\simeq \bigoplus_{i=1}^m\lambda_i(\sigma_{\mathfrak{C}}),
        \end{equation*}
        \emph{where the isomorphism is an isomorphism of $U(\mathfrak{A})$-modules.}
    \end{itemize} 
\end{Proposition}

\begin{proof}
For each $i$, the restriction of $\kappa(\mathfrak{B}_{\rm max})\otimes \tau_i$ to $U^1(\mathfrak{B})J^1(\mathfrak{B}_{\rm max})$ is a multiple of $\eta(\mathfrak{B},\mathfrak{B}_{\rm max})$, and points (i) and (ii) follow exactly as in \cite[Proposition XI.1.4 (i)-(ii)]{BS2017type}. Let us verify (iii). Since $\lambda_i(\sigma_{\mathfrak{C}})=\Ind_{U(\mathfrak{B})J^1(\mathfrak{B}_{\rm max})}^{U(\mathfrak{A})}\kappa(\mathfrak{B}_{\rm max})\otimes \tau_i$ is irreducible, by Lemma V.2 [\textit{loc.\,cit.}] we have 
\begin{equation*}
\mathcal{V}^{\lambda_i(\sigma_{\mathfrak{C}})}=\sum_{g\in U(\mathfrak{A})}g\mathcal{V}^{\kappa(\mathfrak{B}_{\rm max})\otimes \tau_i}.
\end{equation*}
By the arrangement of $(\kappa(\mathfrak{B}_{\rm max})\otimes \tau_i)$, there exist positive integers $m_i$, $i=1,\ldots,m'$, such that 
\begin{equation}\label{eq:eta:isotypic:dec}
    \mathcal{V}^{\eta(\mathfrak{B},\mathfrak{B}_{\rm max})}=\bigoplus_{i=1}^m\kappa(\mathfrak{B}_{\rm max})\otimes \tau_i=\bigoplus_{i=1}^{m'}m_i(\kappa(\mathfrak{B}_{\rm max})\otimes \tau_i),
\end{equation}
where $m_i (\kappa(\mathfrak{B}_{\rm max})\otimes \tau_i)=\bigoplus_{j=1}^{m_i}\kappa(\mathfrak{B}_{\rm max})\otimes \tau_i$. Now, since each $\kappa(\mathfrak{B}_{\rm max})\otimes \tau_i$ is $\eta(\mathfrak{B},\mathfrak{B}_{\rm max})$-isotypic, we have $\mathcal{V}^{\kappa(\mathfrak{B}_{\rm max})\otimes \tau_i}\subseteq \mathcal{V}^{\eta(\mathfrak{B},\mathfrak{B}_{\rm max})}$. By \eqref{eq:eta:isotypic:dec}, we have \[
\mathcal{V}^{\kappa(\mathfrak{B}_{\rm max})\otimes \tau_i}=m_i (\kappa(\mathfrak{B}_{\rm max})\otimes \tau_i), \quad   i=1,\ldots, m'. 
\]
Thus, we have 
\begin{equation*}
\mathcal{V}^{\lambda_i(\sigma_{\mathfrak{C}})}=\sum_{g\in U(\mathfrak{A})}g\mathcal{V}^{\kappa(\mathfrak{B}_{\rm max})\otimes \tau_i}=\sum_{g\in U(\mathfrak{A})}g(m_i(\kappa(\mathfrak{B}_{\rm max})\otimes \tau_i)), \quad i=1,\ldots,m'.
\end{equation*}
The equality in (iii) follows then from \eqref{eq:V:sigma_Sigma} and \eqref{eq:eta:isotypic:dec}. For the isomorphism, it is enough to show that $\mathcal{V}^{\lambda_i(\sigma_{\mathfrak{C}})}=m_i(\lambda_i(\sigma_{\mathfrak{C}})$ for $i=1,\ldots,m'$, i.e., that
\begin{equation*}
    \dim \Hom_{U(\mathfrak{A})}(\lambda_i(\sigma_{\mathfrak{C}}),\mathcal{V})=m_i.
\end{equation*}
The latter is a direct consequence of Frobenius reciprocity. 
\end{proof}

\subsubsection{}\label{ss:lemma:hom:1}\begin{Lemma}
    If $x\in G_L$, then 
    \begin{equation}\label{eq:lemma:hom:1}
        \Hom_{J_{\rm max}}(\lambda_{\rm max}, C_q(\Sigma, \mathcal{C}))=\bigoplus_{i=1}^m\Hom_{J_{\rm max}\cap U(\mathfrak{B})^xJ^1(\mathfrak{B}_{\rm max})^x}(\lambda_{\rm max},\kappa(\mathfrak{B}_{\rm max})^x\otimes\tau_i^x). 
    \end{equation}
\end{Lemma}

\begin{proof}
   Recall that 
\[
C_q(\Sigma,\mathcal{C})= \Ind_{J_{\rm max}\cap U(\mathfrak{A})^x}^{J_{\rm max}} xC_q(\sigma_{\mathfrak{C}},\mathcal{C}).
\]
(See \eqref{eq:dec:Cq:Sigma} above.) Now, by Proposition \ref{prop:XI.1.4}, 
\begin{equation*}
C_q(\sigma_{\mathfrak{C}},\mathcal{C})=x\bigoplus_{i=1}^m \Ind_{U(\mathfrak{B})J^1(\mathfrak{B}_{\rm max})}^{U(\mathfrak{A})}(\kappa(\mathfrak{B}_{\rm max})\otimes \tau_i)
\end{equation*}
as $U(\mathfrak{A})$-modules. Thus, we have
\begin{equation*}
   xC_q(\sigma_{\mathfrak{C}},\mathcal{C}) =\bigoplus_{i=1}^m \Ind_{U(\mathfrak{B})^xJ^1(\mathfrak{B}_{\rm max})^x}^{U(\mathfrak{A})^x}(\kappa(\mathfrak{B}_{\rm max})^x\otimes \tau_i^x).
\end{equation*}
Mackey's restriction formula gives
\begin{equation*}
xC_q(\sigma_{\mathfrak{C}},\mathcal{C})\vert_{J_{\rm max}\cap U(\mathfrak{A})^x}
     =\bigoplus_{i=1}^m\bigoplus_{u\in U}\Ind_{J_{\rm max}\cap U(\mathfrak{B})^{ux}J^1(\mathfrak{B}_{\rm max})^{ux}}^{J_{\rm max}\cap U(\mathfrak{A})^x}(\kappa(\mathfrak{B}_{\rm max})^{ux}\otimes \tau_i^{ux}), 
\end{equation*}
where $U$ is (a set of representatives of) the double coset set
\begin{equation*}
    U=J_{\rm max}\cap U(\mathfrak{A})^x\backslash U(\mathfrak{A})^x/U(\mathfrak{A})^xJ^1(\mathfrak{B}_{\rm max})^x.
\end{equation*}
Consequently, by Frobenius reciprocity, we have
\begin{equation*}
    \Hom_{J_{\rm max}}( \lambda_{\rm max}, C_q(\Sigma,\mathcal{C})) = \bigoplus_{i=1}^m\bigoplus_{u\in U}\Hom_{J_{\rm max}\cap U(\mathfrak{B})^{ux}J^1(\mathfrak{B}_{\rm max})^{ux}}\left(\lambda_{\rm max}, \kappa(\mathfrak{B}_{\rm max})^{ux}\otimes \tau_i^{ux}\right).
\end{equation*}
 Now, let us assume that 
 \[
 \Hom_{J_{\rm max}\cap U(\mathfrak{B})^{ux}J^1(\mathfrak{B}_{\rm max})^{ux}}\left(\lambda_{\rm max}, \kappa(\mathfrak{B}_{\rm max})^{ux}\otimes \tau_i^{ux}\right)\not= 0
 \]
 for some $i$ and some $u\in U$. In order to prove \eqref{eq:lemma:hom:1}, it suffices to verify that $u$ lies within the double class $\overline{1}\in U$ corresponding to the unit element. Since $\tau_i^{ux}$ has cuspidal support $\rho^{ux}$, now seeing $U(\mathfrak{B}_0)^{ux}/U^1(\mathfrak{B}_0)^{ux}$ as a Levi of $U(\mathfrak{B})^{ux}/U^1(\mathfrak{B})^{ux}$, the representation $\kappa(\mathfrak{B}_{\rm max})^{ux}\otimes \tau_i^{ux}$ embeds in 
 \begin{equation*}
     \Ind_{U(\mathfrak{B}_0)^{ux}J^1(\mathfrak{B}_{\rm max})^{ux}}^{U(\mathfrak{B})^{ux}J^1(\mathfrak{B}_{\rm max})^{ux}}\kappa(\mathfrak{B}_{\rm max})^{ux}\otimes \rho^{ux}=\Ind_{J_{\rm max}^{ux}}^{U(\mathfrak{B})^{ux}J^1(\mathfrak{B}_{\rm max})^{ux}}\lambda_{\rm max}^{ux}
 \end{equation*}
 as a $U(\mathfrak{B})^{ux}J^1(\mathfrak{B}_{\rm max})^{ux}$-module. It follows that 
 \begin{equation}\label{eq:hom:lambda:not:0}
\Hom_{J_{\rm max}\cap U(\mathfrak{B})^{ux}J^1(\mathfrak{B}_{\rm max})^{ux}}(\lambda_{\rm max}, \Ind_{J_{\rm max}^{ux}}^{U(\mathfrak{B})^{ux}J^1(\mathfrak{B}_{\rm max})^{ux}}\lambda_{\rm max}^{ux})\not= 0.
 \end{equation}
By Mackey's restriction formula, we have 
\begin{equation*}
    \begin{split}
        \left.\left(\Ind_{J_{\rm max}^ux}^{U(\mathfrak{B})^{ux}J^1(\mathfrak{B}_{\rm max})^{ux}}  \lambda_{\rm max}^{ux} \right) \right\vert_{J_{\rm max}\cap U(\mathfrak{B})^{ux}J^1(\mathfrak{B}_{\rm max})^{ux}}
         & =\bigoplus_{v\in V_u}\Ind_{J_{\rm max}\cap U(\mathfrak{B})^{ux}J^1(\mathfrak{B}_{\rm max})^{ux}\cap J_{\rm max}^{vux}}^{J_{\rm max}\cap U(\mathfrak{B})^{ux}J^1(\mathfrak{B}_{\rm max})^{ux}}\lambda_{\rm max}^{vux} \\
        & = \bigoplus_{v\in V_u} \Ind_{J_{\rm max}\cap J_{\rm max}^{vux}}^{J_{\rm max}\cap U(\mathfrak{B})^{ux}J^1(\mathfrak{B}_{\rm max})^{ux}}\lambda_{\rm max}^{vux},
    \end{split}
\end{equation*}
where 
\[
V_u= J_{\rm max}\cap U(\mathfrak{B})^{ux}J^1(\mathfrak{B}_{\rm max})^{ux}\backslash  U(\mathfrak{B})^{ux}J^1(\mathfrak{B}_{\rm max})^{ux} / J_{\rm max}^{ux}.
\]
Then, by \eqref{eq:hom:lambda:not:0} and Frobenius reciprocity, we obtain 
\[
\bigoplus_{v\in V_u}\Hom_{J_{\rm max}\cap J_{\rm max}^{vux}}(\lambda_{\rm max}, \lambda_{\rm max}^{vux})\not = 0.
\]
Therefore, there exists $v\in U(\mathfrak{B})^{ux}J^1(\mathfrak{B}_{\rm max})^{ux} $ such that $vux$ intertwines $\lambda_{\rm max}$, i.e., $vux\in J_{\rm max}G_L J_{\rm max}$. Note that in this case we have 
\[
vux\in uxU(\mathfrak{B})J^1(\mathfrak{B}_{\rm max}) \cap J_{\rm max} G_L J_{\rm max}.
\]
In particular, the non-emptiness of
\[
uxU(\mathfrak{B})J^1(\mathfrak{B}_{\rm max}) \cap J_{\rm max} G_L J_{\rm max}
\]
implies that 
\[
u\in J_{\rm max} G_L U(\mathfrak{B})J^1(\mathfrak{B}_{\rm max}) x^{-1}.
\]
Since $x\in G_L$, then
\[
J_{\rm max} G_L U(\mathfrak{B})J^1(\mathfrak{B}_{\rm max}) x^{-1}=J_{\rm max} G_L U(\mathfrak{B})^xJ^1(\mathfrak{B}_{\rm max})^x.
\]
Hence, without changing the double class $\overline{u}$ of $u$ in $U$, we may assume that $u\in J_{\rm max}G_L$. Let us write $u=jg_L$, with $j\in J_{\rm max}$ and $g_L\in G_L$. Since $u\in U(\mathfrak{A})^x$, then 
\[
u(x\sigma_{\mathfrak{C}})=x\sigma_{\mathfrak{C}}= j(g_Lx\sigma_{\mathfrak{C}}).
\]
So $x\sigma_{\mathfrak{C}}$ and $g_Lx\sigma_{\mathfrak{C}}$ are simplices of $X_L$ conjugated under the action of $J_{\rm max}\subset U(\mathfrak{A}_{\rm min})$. By \cite[Lemma X.4.4]{BS2017type}, $x\sigma_{\mathfrak{C}}$ and $g_Lx\sigma_{\mathfrak{C}}$ are also conjugated under $U(\mathfrak{C}_{\rm min})$. Let $i\in U(\mathfrak{C}_{\rm min})$ such that $x\sigma_{\mathfrak{C}}=ig_Lx\sigma_{\mathfrak{C}}$. Then $ig_L\in U(\mathfrak{A})^x\cap G_L=U(\mathfrak{C})^x$, which implies that $g_L\in U(\mathfrak{C}_{\rm min})U(\mathfrak{C})^x$. It follows that $u\in J_{\rm max}U(\mathfrak{C}_{\rm min})U(\mathfrak{C})^x=J_{\rm max}U(\mathfrak{C})^x$, but then \[
u\in J_{\rm max}U(\mathfrak{C})^x \cap U(\mathfrak{A})^x= \left(J_{\rm max}\cap U(\mathfrak{A})^x\right)\cdot U(\mathfrak{C})^x,
\]
and the double class $\overline{u}$ of $u$ is $\overline{1}$. Lemma \ref{ss:lemma:hom:1} follows. 
\end{proof}

\subsubsection{}\label{ss:lemma:hom:2}\begin{Lemma}
    For all $x\in G_L$ and for all $1\leq i\leq m$, we have 
    \[
    \dim \Hom_{J_{\rm max}\cap U(\mathfrak{B})^xJ^1(\mathfrak{B}_{\rm max})^x}(\lambda_{\rm max}, \kappa(\mathfrak{B}_{\rm max})^x\otimes\tau_i^x)=\dim\Hom_{U(\mathfrak{B}_0)\cap U(\mathfrak{B})^x}(\rho, \tau_i^x).
    \]
\end{Lemma}

\begin{proof}
   It follows exactly as in \cite[Lemma XI.2.3]{BS2017type}. Denote by $Y$ (resp. $X_0$, $X$) for the space of $\kappa(\mathfrak{B}_{\rm max})$ (resp. $\rho$, $\tau_i$). Let $\varphi\in\Hom_{\mathbb{C}}(Y\otimes X_0, Y\otimes X)={\rm End}_{\mathbb{C}}(Y)\otimes \Hom_{\mathbb{C}}(X_0,X)$. Write
   \[
   \varphi = \sum_{i\in I} S_i\otimes T_i,
   \]
   for some finite index set $I$, where $S_i\in{\rm End}_{\mathbb{C}}(Y)$, $T_i\in\Hom_{\mathbb{C}}(X_0,X)$, and where the $T_i$ are linearly independent. Then $\varphi$ intertwines $\kappa(\mathfrak{B}_{\rm max})\otimes \rho$ and $\kappa^x\otimes\tau_i^x$ if and only if 
   \[
   \sum_{i\in I}\left( S_i\circ\kappa(\mathfrak{B}_{\rm max})(u)\right)\otimes\left( T_i\circ\rho (u)\right) = \sum_{i\in I}\left(\kappa(\mathfrak{B}_{\rm max})^x(u)\circ S_i\right)\otimes\left(\tau_i^x (u)\circ T_i\right)
   \]
   for all $u\in U(\mathfrak{B}_0)J^1(\mathfrak{B}_{\rm max})\cap U(\mathfrak{B})^xJ^1(\mathfrak{B}_{\rm max})^x$. In particular, if $\varphi$ intertwines these representations, for $u\in J^1(\mathfrak{B}_{\rm max})\cap J^1(\mathfrak{B}_{\rm max})^x$, we must have 
   \[
   \sum_{i\in I}\left( S_i\circ \kappa(\mathfrak{B}_{\rm max})(u) - \kappa(\mathfrak{B}_{\rm max})^x(u)\circ S_i  \right)\otimes T_i =0.
   \]
   Since the $T_i$ are linearly independent, we obtain
   \[
   S_i\in \Hom_{J^1(\mathfrak{B}_{\rm max})\cap J^1(\mathfrak{B}_{\rm max})^x}(\kappa(\mathfrak{B}_{\rm max}), \kappa(\mathfrak{B}_{\rm max})^x).
   \]
   By \cite[Proposition 5.2.7]{bushnell1993admissible}, the latter space is 1-dimensional. It follows that any $\varphi\in\Hom_{J_{\rm max}\cap U(\mathfrak{B})^xJ^1(\mathfrak{B}_{\rm max})^x}(\lambda_{\rm max}, \kappa^x\otimes\tau_i^x)$ is of the form $S\otimes T$, where \[
   S\in\Hom_{J(\mathfrak{B}_{\rm max})\cap J(\mathfrak{B}_{\rm max})^x}(\kappa(\mathfrak{B}_{\rm max}), \kappa(\mathfrak{B}_{\rm max})^x)
   \]
   and $T\in\Hom_{\mathbb{C}}(X_0,X)$. We also note that in this case we necessarily have
   \[
   T\in\Hom_{U(\mathfrak{B}_0)\cap U(\mathfrak{B})^x}(\rho,\tau_i^x).
   \]
   In this way, we obtain an isomorphism of $\mathbb{C}$-vector spaces 
   \begin{multline*}
   \Hom_{J_{\rm max}\cap U(\mathfrak{B})^xJ^1(\mathfrak{B}_{\rm max})^x}(\lambda_{\rm max}, \kappa^x\otimes\tau_i^x) \\
   \xrightarrow{\ \sim \ }\Hom_{J^1(\mathfrak{B}_{\rm max})\cap J^1(\mathfrak{B}_{\rm max})^x}(\kappa(\mathfrak{B}_{\rm max}), \kappa(\mathfrak{B}_{\rm max})^x)\otimes \Hom_{U(\mathfrak{B}_0)\cap U(\mathfrak{B})^x}(\rho, \tau_i^x).
   \end{multline*}
   Since the space $\Hom_{J^1(\mathfrak{B}_{\rm max})\cap J^1(\mathfrak{B}_{\rm max})^x}(\kappa(\mathfrak{B}_{\rm max}), \kappa(\mathfrak{B}_{\rm max})^x)$ is 1-dimensional, Lemma \ref{ss:lemma:hom:2} follows.
\end{proof}

\subsection{} We are now in a position to state and prove the conjecture of Broussous and Schneider. Let $S_\Sigma\colon C_q(\Sigma,\mathcal{C})\to\mathcal{V}$ be the $J_{\rm{max}}$-homomorphism defined by 
\begin{equation*}
    S_\sigma(\omega)=\sum_{\sigma\in\Sigma}\omega(\sigma),
\end{equation*}
yielding the exact sequence of $J_{\rm max}$-modules 
\begin{equation*}
    0\longrightarrow K_{\Sigma}\longrightarrow C_q(\Sigma,\mathcal{C})\longrightarrow \sum_{j\in J_{\rm max}}jx\mathcal{V}[\sigma_{\mathfrak{C}}]\longrightarrow 0,
\end{equation*}
where $K_\Sigma=\ker S_\Sigma$. Taking $\lambda_{\rm max}$-isotypic components, we obtain the exact sequence 
\begin{equation*}
    0\longrightarrow K^{\lambda_{\rm max}}_{\Sigma}\longrightarrow C_q(\Sigma,\mathcal{C})^{\lambda_{\rm max}}\longrightarrow \left(\sum_{j\in J_{\rm max}}jx\mathcal{V}[\sigma_{\mathfrak{C}}]\right) ^{\lambda_{\rm max}}\longrightarrow 0.
\end{equation*}
 As given in the equation preceding \cite[Conjecture X.4.1]{BS2017type},
 \begin{equation*}
    \left(\sum_{j\in J_{\rm max}}jx\mathcal{V}[\sigma_{\mathfrak{C}}]\right) ^{\lambda_{\rm max}}=\left\lbrace \begin{array}{ll}
       \mathcal{V}^{\lambda_{\rm max}} & {\rm if }\ x\in J_{\rm max}G_LU(\mathfrak{A}),\\
         0 & {\rm otherwise}.
    \end{array}\right.
\end{equation*}

\begin{Theorem}[\emph{(Broussous-Schneider's Conjecture)}]\label{main:thm}
    \emph{Let $q \leq d$ be a non-negative integer, and let $\Sigma = J_{\rm max} x \cdot \sigma_{\mathfrak{C}}$ be a $J_{\rm max}$-orbit in $X[L]_q$, where $\mathfrak{C}$ is an $\mathfrak{o}_L$-order in $C$ with $\mathfrak{C}_{\rm min} \subseteq \mathfrak{C} \subseteq \mathfrak{C}_{\rm max}$ and $x \in G$. Then \( K_\Sigma^{\lambda_{\rm max}} = 0 \). Equivalently,
    \[
        \dim \Hom_{J_{\rm max}}(\lambda_{\rm max}, C_q(\Sigma, \mathcal{C})) = \dim \Hom_{J_{\rm max}}(\lambda_{\rm max}, \mathcal{V}^{\lambda_{\rm max}}),
    \]
    for all \( x \in J_{\rm max} G_L U(\mathfrak{A}) \), whereas
    \[
        \dim \Hom_{J_{\rm max}}(\lambda_{\rm max}, C_q(\Sigma, \mathcal{C})) = 0
    \]
    for \( x \not\in J_{\rm max} G_L U(\mathfrak{A}) \).
    }
\end{Theorem}

\begin{proof}
Proceeding as in the proof of Lemma \ref{ss:lemma:hom:1}, we arrive at
\begin{equation*}
    \Hom_{J_{\rm max}}( \lambda_{\rm max}, C_q(\Sigma,\mathcal{C})) = \bigoplus_{i=1}^m\bigoplus_{u\in U}\Hom_{J_{\rm max}\cap U(\mathfrak{B})^{ux}J^1(\mathfrak{B}_{\rm max})^{ux}}\left(\lambda_{\rm max}, \kappa(\mathfrak{B}_{\rm max})^{ux}\otimes \tau_i^{ux}\right),
\end{equation*}
where $U$ is the double coset set
\begin{equation*}
    U=J_{\rm max}\cap U(\mathfrak{A})^x\backslash U(\mathfrak{A})^x/U(\mathfrak{A})^xJ^1(\mathfrak{B}_{\rm max})^x.
\end{equation*}
Furthermore, we have seen that if
\begin{equation}\label{eq:hom:not:0}
    \Hom_{J_{\rm max}\cap U(\mathfrak{B})^{ux}J^1(\mathfrak{B}_{\rm max})^{ux}}\left(\lambda_{\rm max}, \kappa(\mathfrak{B}_{\rm max})^{ux}\otimes \tau_i^{ux}\right)\not= 0
\end{equation}
for some $u\in U(\mathfrak{B})^x$, then $uxU(\mathfrak{B})J^1(\mathfrak{B}_{\rm max})\cap J_{\rm max}G_LJ_{\rm max}\not = \emptyset$. Since $uxU(\mathfrak{B})J^1(\mathfrak{B}_{\rm max})\subseteq x U(\mathfrak{B})$ for all $u\in U(\mathfrak{A})^x$, then \eqref{eq:hom:not:0} implies that 
\[
xU(\mathfrak{A})\cap J_{\rm max}G_LJ_{\rm max}\not= \emptyset,
\]
that is $x\in J_{\rm max}G_LU(\mathfrak{A})$. Thus, Theorem \ref{main:thm} holds when $x\not\in J_{\rm max}G_LU(\mathfrak{A})$. 

Let us assume that $x\in J_{\rm max}G_LU(\mathfrak{A})$. Writing $j=jg_Lu$, $j\in J_{\rm max}$, $g_L\in G_L$ and $u\in U(\mathfrak{A})$, we have that 
\[
\Sigma=J_{\rm max}x\sigma_{\mathfrak{C}}=J_{\rm max}g_L\sigma_{\mathfrak{C}},
\]
so we may and do assume that $x\in G_L$. By Lemma \ref{ss:lemma:hom:1} and Lemma \ref{ss:lemma:hom:2}, we have 
\begin{equation}\label{eq:sum:dim:lemm}
\dim \Hom_{J_{\rm max}}(\lambda_{\rm max},C_q(\Sigma,\mathcal{C}))=\sum_{i=1}^m \dim\Hom_{U(\mathfrak{B}_0)\cap U(\mathfrak{B})^x}(\rho,\tau_i^x).
\end{equation}
Since $\lambda_{\rm max}$ is $\eta(\mathfrak{B},\mathfrak{B}_{\rm max})$-isotypic, then 
\begin{equation*}
    \Hom_{J_{\rm max}}(\lambda_{\rm max},\mathcal{V})=\Hom_{J_{\rm max}}(\lambda_{\rm max},\mathcal{V}^{\eta(\mathfrak{B},\mathfrak{B}_{\rm max})})=\bigoplus_{i=1}^m\Hom_{J_{\rm max}}(\lambda_{\rm max}, \kappa(\mathfrak{B}_{\rm max})\otimes \tau_i).
\end{equation*}
Thus, Theorem \ref{main:thm} holds for $x=1$, and it remains to prove that \eqref{eq:sum:dim:lemm} is dependent of $x$, provided $x\in G_L$. This follows from Lemma \ref{lem:techn}.
\end{proof}

\subsection{}\label{ss:main}  As a consequence of Theorem \ref{main:thm}, we obtain the following result, as already observed in \cite[Theorem X.5.2]{BS2017type}, which is now unconditional. 

\begin{thm*}
     Let $(J,\lambda)$ be a simple type and let $\mathcal{V}$ be any admissible representation in $\mathcal{R}_{(J,\lambda)}(G)$. The augmented chain complex
     \begin{equation}\label{eq:bs:res}
    C_\bullet^{{\rm or}}(X[L], \mathcal{C}[\mathcal{V}])\longrightarrow \mathcal{V}
    \end{equation}
    is a resolution of $\mathcal{V}$ on $\mathcal{R}_{(J,\lambda)}(G)$. If $\mathcal{V}$ has central character $\chi$, then the resolution is projective in $\mathcal{R}_\chi(G)$.
\end{thm*}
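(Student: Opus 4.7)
The plan is to follow the strategy laid out in \cite[\S X.5]{BS2017type}, whose argument for the resolution statement was conditional on Conjecture X.4.1 and becomes unconditional now that Theorem \ref{main:thm} is available.

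First, I would reduce the exactness question to the level of $\lambda_{\rm max}$-isotypic components. The pair $(J_{\rm max},\lambda_{\rm max})$ is related to $(J,\lambda)$ by a Bushnell-Kutzko cover, so $\mathcal{V}\mapsto \mathcal{V}^{\lambda_{\rm max}}$ is an exact and conservative functor on $\mathcal{R}_{(J,\lambda)}(G)$. Since, by \cite[Proposition IX.2]{BS2017type}, each $C_q^{\rm or}(X[L],\mathcal{C})$ already lies in this Bernstein block, it suffices to prove exactness of the induced complex $C_\bullet^{\rm or}(X[L],\mathcal{C})^{\lambda_{\rm max}}\to \mathcal{V}^{\lambda_{\rm max}}$.

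Next, I would decompose $C_q^{\rm or}(X[L],\mathcal{C})$ as a $J_{\rm max}$-module along the $J_{\rm max}$-orbits of oriented $q$-simplices in $X[L]$ and apply Theorem \ref{main:thm} orbit by orbit. That theorem shows that only those orbits meeting $X_L$ contribute to the $\lambda_{\rm max}$-isotypic component, and on each such orbit the contribution is canonically identified with $\mathcal{V}^{\lambda_{\rm max}}$. Bookkeeping the $G_L$-action and the orientations, one obtains an identification of the $\lambda_{\rm max}$-isotypic complex with the augmented oriented chain complex of $X_L$ having the constant coefficient system of value $\mathcal{V}^{\lambda_{\rm max}}$. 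The contractibility of the Bruhat-Tits building $X_L$ of $G_L$ then yields exactness of this complex, establishing that \eqref{eq:bs:res} is a resolution in $\mathcal{R}_{(J,\lambda)}(G)$.

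For projectivity in $\mathcal{R}_\chi(G)$, I would observe that $C_q^{\rm or}(X[L],\mathcal{C})$ decomposes, as a $G$-module, as a direct sum over $G$-orbits of oriented $q$-simplices of compactly induced modules of the form $\cInd_{G_\sigma^+}^G \mathcal{V}[\sigma]$, where $G_\sigma^+$ denotes the $G$-stabilizer of an oriented $q$-simplex $\sigma$. Each $G_\sigma^+$ is open and compact modulo $Z(G)$, and $\mathcal{V}[\sigma]$ is finite-dimensional by the admissibility of $\mathcal{V}$, so each summand is projective in $\mathcal{R}_\chi(G)$. The main obstacle was already resolved in establishing Theorem \ref{main:thm}; the remaining work here is largely formal, the principal subtlety being the orbit-by-orbit identification of $C_\bullet^{\rm or}(X[L],\mathcal{C})^{\lambda_{\rm max}}$ with the constant-coefficient chain complex on $X_L$.
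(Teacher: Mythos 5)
Your approach follows the one the paper itself takes, which is simply to observe that Theorem \ref{main:thm} is precisely Conjecture X.4.1 of \cite{BS2017type}, so that \cite[Theorem X.5.2]{BS2017type} --- where both the exactness and the projectivity in $\mathcal{R}_\chi(G)$ are established conditionally on that conjecture --- becomes unconditional. The paper therefore adds nothing new at this step beyond the citation, whereas you also sketch the underlying argument of \cite[\S\,X.5]{BS2017type}.

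Your sketch is broadly consistent with that argument, but the identification in the key ``bookkeeping'' step, as you state it, is not quite right: the contributing $J_{\rm max}$-orbits in $X[L]_q$ do \emph{not} biject with the $q$-simplices of $X_L$. Two simplices of $X_L$ lie in the same $J_{\rm max}$-orbit if and only if they lie in the same $U(\mathfrak{C}_{\rm min})$-orbit (this is \cite[Lemma X.4.4]{BS2017type}, invoked in the proof of Lemma \ref{ss:lemma:hom:1}), so the $\lambda_{\rm max}$-isotypic complex is indexed by $U(\mathfrak{C}_{\rm min})$-orbits of facets of $X_L$ --- equivalently, via Iwahori--Bruhat, by facets of a single apartment --- rather than by all facets of $X_L$. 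The conclusion is unaffected (an apartment is contractible, and one can alternatively view the isotypic complex as the $U(\mathfrak{C}_{\rm min})$-invariants of the constant-coefficient complex on $X_L$, which stays exact because $U(\mathfrak{C}_{\rm min})$ is compact and we work over $\mathbb{C}$), but the step is more delicate than identifying with the constant-coefficient chain complex of the full building $X_L$. The reduction to the $\lambda_{\rm max}$-isotypic component and the projectivity argument via compact induction from the stabilizers $G_\sigma^+$ of oriented simplices are as in \cite{BS2017type} and \cite{schneider1997representation} and are fine.
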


\bibliographystyle{acm}
\bibliography{Ref.bib} 

\medskip

\noindent{\sc \Small Francisco Javier Navarro, Instituto de Matem\'aticas, Pontificia Universidad Cat\'olica de Valpara\'iso, Blanco Viel 596, Cerro Bar\'on, Valpara\'iso, Chile}\\
\emph{\Small E-mail address: }\texttt{\Small \href{mailto:francisco.navarro.n@pucv.cl}{francisco.navarro.n@pucv.cl}}

\end{document}